\newcommand\R{\mathbb{R}}
\newtheorem{theorem}{Theorem}[section]
\newtheorem{proposition}[theorem]{Proposition}
\theoremstyle{remark}
\theoremstyle{remark}
\newtheorem{example}[theorem]{Example}
\theoremstyle{remark}
\newtheorem{remark}[theorem]{Remark}
\begin{document}

\begin{center}{\Large \bf
Hafnian point processes and quasi-free states on the CCR algebra}
\end{center}

{\large Maryam Gharamah Ali Alshehri}\\ Department of Mathematics, Faculty of Science, University of Tabuk, Tabuk, KSA; \\
e-mail: \texttt{mgalshehri@ut.edu.sa}\vspace{2mm}

{\large Eugene Lytvynov\\ Department of Mathematics, Swansea University,  Swansea, UK;\\
e-mail: \texttt{e.lytvynov@swansea.ac.uk}\vspace{2mm}

{\small

\begin{center}
{\bf Abstract} 
\end{center}

\noindent  Let $X$  be a locally compact Polish space and $\sigma$  a nonatomic reference measure on $X$ (typically $X=\mathbb R^d$ and $\sigma$ is the Lebesgue measure). Let $X^2\ni(x,y)\mapsto\mathbb K(x,y)\in\mathbb C^{2\times 2}$ be a $2\times 2$-matrix-valued kernel that satisfies $\mathbb K^T(x,y)=\mathbb K(y,x)$. We say that a point process $\mu$ in $X$ is  hafnian with correlation kernel $\mathbb K(x,y)$ if,  for each $n\in\mathbb N$, the $n$th correlation function of $\mu$ (with respect to $\sigma^{\otimes n}$) exists and is given by $k^{(n)}(x_1,\dots,x_n)=\operatorname{haf}\big[\mathbb K(x_i,x_j)\big]_{i,j=1,\dots,n}\,$. Here $\operatorname{haf}(C)$ denotes the hafnian of a symmetric matrix $C$. Hafnian point processes include permanental and 2-permanental point processes as special cases. A Cox process $\Pi_R$ is a Poisson point process in $X$ with random intensity $R(x)$. Let $G(x)$ be a complex Gaussian field on $X$ satisfying $\int_{\Delta}\mathbb E(|G(x)|^2)\sigma(dx)<\infty$ for each compact $\Delta\subset X$. Then the Cox process $\Pi_R$ with $R(x)=|G(x)|^2$ is a hafnian point process. The main result of the paper is that each such process $\Pi_R$ is the joint spectral measure of a rigorously defined particle density of a representation of the canonical commutation relations (CCR), in a symmetric Fock space, for which the corresponding vacuum state on the CCR algebra is quasi-free. 

 } \vspace{2mm}

{\bf Keywords:} Hafnian point process, Cox process, permanental point process; quasi-free state on CCR algebra 
\vspace{2mm}

\noindent
{\it Mathematics Subject Classification (2020):} Primary 60G55; 46L30 Secondary 60G15

\section{Introduction}

\subsection{Hafnian point processes}

Let $X$ be a locally compact Polish space, let $\mathcal B(X)$ denote the Borel $\sigma$-algebra on $X$, and let $\mathcal B_0(X)$ denote the algebra of all pre-compact sets from $\mathcal B(X)$. Let $\sigma$ be a reference measure on $(X,\mathcal B(X))$ which is non-atomic (i.e., $\sigma(\{x\})=0$ for all $x\in X$) and Radon (i.e., $\sigma(\Delta)<\infty$ for all $\Delta\in\mathcal B_0(X)$). For applications, the most important example is $X=\R^d$ $\sigma(dx)=dx$ is the Lebesgue measure.

A {\it (simple) configuration} $\gamma$ in $X$ is a Radon measure on $X$ of the form $\gamma=\sum_i\delta_{x_i}$, where $\delta_{x_i}$ denotes the Dirac measure with mass at $x_i$ and $x_i\ne x_j$ if $i\ne j$. Note that, since $\gamma$ is a Radon measure, it has a finite number of atoms in each compact set in $X$. Let $\Gamma(X)$ denote the set of all configurations $\gamma$ in $X$. Let $\mathcal C(\Gamma(X))$ denote the minimal $\sigma$-algebra on $\Gamma(X)$ such that, for each $\Delta\in\mathcal B_0(X)$, the  mapping $\Gamma(X)\ni\gamma\mapsto\gamma(\Delta)$ is measurable. A {\it (simple) point process in $X$} is a probability measure on $(\Gamma(X),\mathcal C(\Gamma(X)))$.

Denote $X^{(n)}:=\{(x_1,\dots,x_n)\in X^n\mid x_i\ne x_j\text{ if }i\ne j\}$. A measure on $X^{(n)}$ is called symmetric if it remains invariant under the natural action of the symmetric group $\mathfrak S_n$ on $X^{(n)}$. 
 For each $\gamma=\sum_i\delta_{x_i}\in\Gamma(X)$, the {\it spatial falling factorial} $(\gamma)_n$ is the symmetric measure on $X^{(n)}$ of the form
\begin{equation}\label{ctrsw5ywus}
(\gamma)_n:=\sum_{i_1}\sum_{i_2\ne i_1}\dotsm\sum_{i_n\ne i_1,\dots, i_n\ne i_{n-1}}\delta_{(x_{i_1}, x_{i_2}, \dots,x_{i_n})}.\end{equation}
Let $\mu$ be a point process in $X$. The {\it $n$-th correlation measure of $\mu$} is the symmetric measure $\theta^{(n)}$ on $X^{(n)}$ defined by
\begin{equation}\label{rqay45qy4q}
\theta^{(n)}(dx_1\dotsm dx_n):=\frac1{n!}\int_{\Gamma(X)}(\gamma)_n(dx_1\dotsm dx_n)\,\mu(d\gamma).
\end{equation}
If each measure $\theta^{(n)}$ is absolutely continuous with respect to $\sigma^{\otimes n}$, then the symmetric functions $k^{(n)}:X^{(n)}\to[0,\infty)$ satisfying 
\begin{equation}\label{5w738}
d\theta^{(n)}=\frac1{n!}\,k^{(n)}d\sigma^{\otimes n}\end{equation} are called the {\it correlation functions of the point process $\mu$}. Under a very weak assumption, the correlations functions (or correlation measures) uniquely identify a point process, see  \cite{Lenard}.

Let $C=[c_{ij}]_{i,j=1,\dots,2n}$ be a symmetric $2n\times2n$-matrix. The {\it hafnian of $C$} is defined by
$$\operatorname{haf}(C):=\frac1{n!\,2^n}\sum_{\pi\in\mathfrak S_{2n}}\prod_{i=1}^n c_{\pi(2i-1)\pi(i)}, $$
see e.g.\ \cite[Section~4.1]{Barvinok}. 
(Note the the value of the hafnian of $C$ does not depend on the diagonal elements  of the matrix $C$.)
The hafnian can also be written as
\begin{equation}\label{cxtseewu645}
\operatorname{haf}(C)=\sum c_{i_1j_1}\dotsm c_{i_nj_n},\end{equation}
where the summation is over all (unordered) partitions $\{i_1,j_1\},\dots,\{i_n,j_n\}$ of $\{1,\dots,2n\}$. 

Hafnians were introduced by  physicist Edoardo Caianiello in the 1950's, while visiting Niels Bohr's group in Copenhagen (whose latin name is Hafnia), as a Boson analogue of the formula expressing the
correlations of a quasi-free Fermi state.\footnote{We are grateful to the referee for sharing with us this historical fact.}

By analogy with the definition of a pfaffian point process (see e.g.\ \cite[Section~10]{Borodin} and the references therein), we  now define a hafnian point process. Let $X^2\ni(x,y)\mapsto\mathbb K(x,y)\in\mathbb C^{2\times 2}$ be a $2\times 2$-matrix-valued kernel that satisfies $\mathbb K^T(x,y)=\mathbb K(y,x)$.
We will say that a point process $\mu$ is {\it hafnian with correlation kernel $\mathbb K(x,y)$} if,  for each $n\in\mathbb N$, the $n$th correlation function of $\mu$ exists and is given by 
\begin{equation}\label{vytds6}
k^{(n)}(x_1,\dots,x_n)=\operatorname{haf}\big[\mathbb K(x_i,x_j)\big]_{i,j=1,\dots,n}\,.\end{equation}
Note that the matrix
$$ \big[\mathbb K(x_i,x_j)\big]_{i,j=1,\dots,n}=\left[\begin{matrix}
\mathbb K(x_1,x_1)&\mathbb K(x_1,x_2)&\dotsm&\mathbb K(x_1,x_n)\\
\mathbb K(x_2,x_1)&\mathbb K(x_2,x_2)&\dotsm&\mathbb K(x_2,x_n)\\
\vdots&\vdots&\vdots&\vdots\\
\mathbb K(x_n,x_1)&\mathbb K(x_n,x_2)&\dotsm&\mathbb K(x_n,x_n)
\end{matrix}\right]$$
is built upon $2\times2$-blocks $\mathbb K(x_i,x_j)$, hence it has dimension $2n\times 2n$. Furthermore, the condition $\mathbb K^T(x,y)=\mathbb K(y,x)$ ensures that the matrix $\big[\mathbb K(x_i,x_j)\big]_{i,j=1,\dots,n}$ is symmetric, and so its hafnian is a well-defined number.

Since 
$$X^2=\{(x,x)\mid x\in X\}\sqcup X^{(2)},$$
for the definition of a hafnian point process, it is  sufficient to assume that $\mathbb K(x,x)$ is defined for $\sigma$-a.a.\ $x\in X$, and the restriction of $\mathbb K(x,y)$ to $X^{(2)}$ is defined for $\sigma^{\otimes 2}$-a.a.\ $(x,y)\in X^{(2)}$. 

Note that, for the hafnian point process $\mu$, the correlation kernel $\mathbb K(x,y)$ is not uniquely determined by $\mu$. Indeed, since the hafnian of a matrix does not depend on its diagonal elements, formula \eqref{vytds6} implies that the correlation functions $k^{(n)}(x_1,\dots,x_n)$ do not depend on the diagonal elements of the $2\times 2$-matrices $\mathbb K(x,x)$ for $x\in X$. Hence, these elements can be chosen arbitrarily.

Let $\alpha\in\mathbb R$ and let $B=[b_{ij}]_{i,j=1,\dots,n}$ be an $n\times n$ matrix. The {\it $\alpha$-determinant of $B$} is defined by
\begin{equation}\label{tera5yw3}
\operatorname{det}_\alpha (B):=\sum_{\pi\in \mathfrak S_n}\prod_{i=1}^n\alpha^{n-\nu(\pi)}\,b_{i\,\pi(i)},\end{equation}
see  \cite{VJ,ST}. In formula \eqref{tera5yw3},   for $\pi\in \mathfrak S_n$, $\nu(\pi)$ denotes the number of cycles in the permutation $\pi$. In particular, for $\alpha=1$, $\operatorname{det}_1(B)$ is the usual permanent of $B$.

 A point process $\mu$ is called {\it $\alpha$-permanental (or  $\alpha$-determinantal) with correlation kernel $K:X^2\to\mathbb C$} if, for each $n\in\mathbb N$, the $n$th correlation function of $\mu$ exists and is given by 
$$k^{(n)}(x_1,\dots,x_n)=\operatorname{det}_\alpha [K(x_i,x_j)]_{i,j=1,\dots,n},$$
 \cite{ST}, see also \cite{E2}. For $\alpha=1$, one calls $\mu$ a {\it permanental point process}.

As easily follows from \cite[Section~4.1]{Barvinok} a permanental point process with correlation kernel $K(x,y)$ is hafnian  with correlation kernel
$$\mathbb K(x,y)=\left[\begin{matrix}0&K(x,y)\\K(y,x)&0\end{matrix}\right].$$
Furthermore, similarly to \cite[Proposition~1.1]{Frenkel}, we see that a $2$-permanental point process with a symmetric correlation kernel $K(x,y)=K(y,x)$ is hafnian with the correlation kernel
$$\mathbb K(x,y)=\left[\begin{matrix}K(x,y)&K(x,y)\\K(x,y)&K(x,y)\end{matrix}\right].$$
For studies of permanental, and more generally $\alpha$-permanental point processes, we refer to \cite{BM,KMR,Macchi1,Macchi2,ST}.

Recall that a {\it Cox  process $\Pi_R$} is a Poisson point process with a random intensity $R(x)$. Here $R(x)$ is a random field defined for $\sigma$-a.a\ $x\in X$ and taking a.s.\ non-negative values. The  correlation functions of the Cox process $\Pi_R$ are  given by
\begin{equation}\label{xreew5u}
k^{(n)}(x_1,\dots,x_n)=\mathbb E\big(R(x_1)\dotsm R(x_n)\big).\end{equation}

Let $G(x)$ be a mean-zero, complex Gaussian field defined for $\sigma$-a.a.\ $x\in X$. Assume additionally that $\int_\Delta\mathbb E(|G(x)|^2)\sigma(dx)<\infty$  for each $\Delta\in\mathcal B_0(X)$. Let $R(x):=|G(x)|^2=G(x)\overline{G(x)}$. 
Comparing the classical moment formula for Gaussian random variables  with formula \eqref{cxtseewu645}, we immediately see that
\begin{equation}\label{dr6e6u43wq}
\mathbb E\big(R(x_1)\dotsm R(x_n)\big)=\operatorname{haf}\big[\mathbb K(x_i,x_j)\big]_{i,j=1,\dots,n}\, , \end{equation}
where 
\begin{equation}\label{d6e6ie4}
\mathbb K(x,y)=\left[\begin{matrix}
\mathbb E(G(X)G(y))&\mathbb E(G(x)\overline{G(y)})\\\mathbb E(\overline{G(x)}G(y))&\mathbb E(\overline{G(x)G(y)})\end{matrix}\right]=\left[\begin{matrix}
\mathcal K_2(x,y)&\mathcal K_1(x,y)\\\overline{\mathcal K_1(x,y)}&\overline{\mathcal K_2(x,y)}\end{matrix}\right].\end{equation}
Here $\mathcal K_1(x,y):=\mathbb E(G(x)\overline{G(y)})$ is the {\it covariance} of the Gaussian field and $\mathcal K_2(x,y):=\mathbb E(G(x)G(y))$ is the {\it pseudo-covariance} of the Gaussian field. By \eqref{xreew5u}--\eqref{d6e6ie4}, the corresponding Cox process  $\Pi_R$ is hafnian with the correlation kernel \eqref{d6e6ie4}.

In the case where the Gaussian field $G(x)$ is real-valued, the moments
of $R(x)$ are given by the $2$-determinants built upon the kernel $K(x,y):=\mathcal K_1(x,y)=\mathcal K_2(x,y)$, hence $R(x)$ is a 2-permanental process.  For studies of $\alpha$-permanental processes, we refer e.g.\ to \cite{E1,E2,E3,E4,KMR,MR1,MR2,MR3} and the references therein.
Obviously, in this case, $\Pi_R$ is  a $2$-permanental point process with the correlation kernel $K(x,y)$, compare with \cite[Subsection~6.4]{ST}.

A Gaussian random field is called {\it proper} if $\mathcal K_2(x,y)=0$ for all $x$ and $y$. Since the moments of the random field $R(x)$ are given by permanents built upon the kernel $K(x,y):=\mathcal K_1(x,y)$, $R(x)$ is  a  permanental process, compare with \cite{BM,Macchi1,Macchi2}. We note, however,  that the available studies of $\alpha$-permanental processes usually discuss only the case where the kernel  is real-valued. In the case of $R(x)$, the correlation kernel is, of course, complex-valued.

\subsection{Aim  of the paper}

Quasi-free states play a central role in studies of operator algebras related to quantum statistical mechanics, see e.g.\  \cite{A1,A2,A3,A4,BR,DG,MV}.  

Let $\mathcal H=L^2(X,\sigma)$  be the $L^2$-space of $\sigma$-square-integrable functions $h:X\to\mathbb C$. Let  $\mathfrak F$ be  a separable complex  Hilbert spaces. Let $A^+(h)$,   $A^-(h)$ ($h\in\mathcal H$) be linear operators in $\mathfrak F$ that satisfy the following assumptions: 
\begin{itemize}

\item[(i)] $ A^+(h)$ and $A^-(h)$ depend linearly on $h\in\mathcal H$;

\item[(ii)] for each  $h\in\mathcal H$, $A^-(\overline h)$ is  (the restriction of) the adjoint operator  of $A^+(h)$, where $\bar h$ is the complex conjugate of $h$; 

\item[(iii)] the operators $A^+(h)$,   $A^-(h)$ satisfy the canonical commutation relations (CCR).

\end{itemize}
See Section~\ref{xrwaq4q2} for details.

 Let $\mathbb A$ be the unital $*$-algebra generated by the operators $A^+(h)$,   $A^-(h)$. If we additionally assume that $\mathfrak F$ is a certain symmetric Fock space, then we can define the vacuum state $\tau$ on   $\mathbb A$. If $\tau$ appears to be a quasi-free state, one says that the operators $A^+(h)$ and $A^-(h)$ form a {\it quasi-free representation of the CCR}.

We  define operator-valued distributions $A^+(x)$ and $A^-(x)$
($x\in X$) through the equalities 
 \begin{equation}\label{ray45}
A^+(h)=\int_X h(x)A^+(x)\sigma(dx),\quad A^-(h)=\int_X h(x)A^-(x)\sigma(dx),\end{equation}
holding for all $h\in\mathcal H$.

Then the {\it particle density $\rho(x)$} is formally defined as 
$$\rho(x):=A^+(x)A^-(x),\quad x\in X.$$  
We called this definition {\it formal} since it requires to take product of two operator-valued distributions, and {\it a priori\/} it is  not clear if this product indeed makes sense. Nevertheless, in all the examples below, we will be able to rigorously define $\rho(x)$ as an operator-valued distribution.

The CCR  imply   the commutation $[\rho(x),\rho(y)]=0$ ($x,y\in X$), where  $[\cdot,\cdot]$ denotes the commutator. For each $\Delta\in\mathcal B_0(X)$, we denote
\begin{equation}\label{raq5wu}
\rho(\Delta):=\int_\Delta\rho(x)\sigma(dx)=\int_\Delta A^+(x)A^-(x)\sigma(dx),\end{equation}
which is  a family of Hermitian commuting operators in the Fock space $\mathfrak F$. In view of the spectral theorem, one can expect that the operators $(\rho_\Delta)_{\Delta\in\mathcal B_0(X)}$ can be realized as operators of multiplication in  $L^2(\Gamma_X,\mu)$, where $\mu$ is the joint spectral measure of this family of operators at the vacuum.  

Let $G(x)$ be a complex-valued  Gaussian field and $R(x)=|G(x)|^2$. The main aim of the paper is show that the Cox process $\Pi_R$ is  the joint spectral measure of
a (rigorously defined) particle  density $(\rho_\Delta)_{\Delta\in\mathcal B_0(X)}$ for a certain quasi-free representation of the CCR. As a by-product, we obtain a unitary isomorphism between a subspace of a Fock space and $L^2(\Gamma_X,\Pi_R)$. 

In the special case where $\Pi_R$ is a permanental point process (with a real-valued correlation kernel), such a statement was proved in \cite{LM} (see also \cite{Ly}).
In that case, the corresponding quasi-free state has an additional property of being gauge-invariant, so one could use the gauge-invariant  quasi-free representation of the CCR by Araki and Woods \cite{ArWoods}. 

We stress that, even in the case of a gauge-invariant quasi-free state, the representation of the CCR  that we use in this paper has a different form as compared to the  one by Araki and Woods \cite{ArWoods}. Nevertheless, since both gauge-invariant quasi-free representations have the same $n$-point functions, one can show that these representations are unitarily equivalent.

We note that, in \cite{Ly,LM}, it was also shown that each determinantal point process ($\alpha=-1$) arises as the joint spectral measure of the particle density of a quasi-free representation of the Canonical Anticommutation Relations (CAR). In that case, the state is also gauge-invariant, so one can use the Araki--Wyss representation of the CAR from \cite{ArWyss}.

  It is worth to compare our result with the main result of Koshida \cite{Koshida}. In the latter paper, it is proven that, when the  underlying space $X$ is {\it discrete},   every pfaffian point process on $X$ arises as the particle density of a quasi-free representation of the CAR. As noted in  \cite{Koshida}, a similar statement in the case of a continuous space $X$ is still an open problem.

\subsection{Organization of the paper}

The starting point of our considerations is the observation that the Poisson point process with (deterministic) intensity $|\lambda(x)|^2$ arises from the trivial (quasi-free) representation of the CCR with 
\begin{equation}\label{teraw5uw}
A^+(x)=a^+(x)+\overline{\lambda(x)},\quad A^-(x)=a^-(x)+\lambda(x),
\end{equation}
 where $a^+(x)$, $a^-(x)$ are the creation and annihilation operators at point $x$, acting in the symmetric Fock space $\mathcal F(\mathcal H)$ over $\mathcal H$, compare with \cite{GGPS}. We then proceed as follows:

\begin{itemize}
\item We realize a Gaussian field $G(x)$ as a family of operators $\Phi(x)$
acting in a Fock space $\mathcal F(\mathcal G)$ over a Hilbert space $\mathcal G$ (typically $\mathcal G=\mathcal H$  or $\mathcal G=\mathcal H\oplus\mathcal H$).

\item We consider a quasi-free representation of the CCR with 
\begin{equation}\label{fyr6sw6u}
A^+(x)=a^+(x)+\Phi^*(x),\quad  A^-(x)=a^-(x)+\Phi(x)
\end{equation}
 acting in the Fock space $\mathcal F(\mathcal H\otimes\mathcal G)=\mathcal F(\mathcal H)\otimes\mathcal F(\mathcal G)$.

\item We prove that the corresponding particle density $(\rho_\Delta)_{\Delta\in\mathcal B_0(X)}$ is well-defined and has the joint spectral measure $\Pi_R$.

\end{itemize}

The paper is organized as follows. In Section \ref{ew56u3wu}, we discuss complex-valued Gaussian fields on $X$ realized in a symmetric Fock space $\mathcal F(\mathcal G)$ over a separable Hilbert space $\mathcal G$. We start with a $\mathcal G^2$-valued function $(L_1(x),L_2(x))$ that is defined for $\sigma$-a.a.\ $x\in X$ and satisfies the assumptions \eqref{ydxdrdxdrg}, \eqref{xrsa5yw4} below. We then define operators $\Phi(x)$ in the Fock space $\mathcal F(\mathcal G)$   by formula \eqref{waaq4yq5y}. Theorem~\ref{reaq5y43wu} states that the operators $\Phi(x)$ form a  Fock-space realization of a Gaussian field $G(x)$ that is defined for $\sigma$-a.a.\ $x\in X$. (Note, however, that the set of those $x\in X$ for which $G(x)$ is defined can be smaller than the set of those $x\in X$ for which the function $(L_1(x),L_2(x))$ was defined.) The covariance and  pseudo-covariance of the Gaussian field $G(x)$  are given by formulas \eqref{w909i8u7y689} and \eqref{ftsqw43qd}, respectively. 

As a consequence of our considerations, in Example~\ref{vcrtw5y3}, we derive a  Fock-space realization of a proper Gaussian field. The operators $\Phi(x)$ in this case resemble the classical Fock-space realization of a real-valued Gaussian field. The main difference is that, in the case of a real-valued Gaussian field, the creation and annihilation operators use  same real vectors, whereas in the case of a proper Gaussian field, the creation and annihilation operators use orthogonal copies of  same complex vectors.

In Section~\ref{xrwaq4q2}, we briefly recall the definition of a quasi-free state on the CCR algebra and a quasi-free representation of the CCR. 

Next, in Section~\ref{tew532w}, we recall in Theorem~\ref{tes56uwe4u6} a result from \cite{LM}  which gives sufficient conditions for a family of commuting Hermitian operators, $(\rho(\Delta))_{\Delta\in\mathcal B_0(X)}$, in a separable complex Hilbert space, to be essentially self-adjoint and have a point process $\mu$ in $X$ as their joint spectral measure. The key condition of Theorem~\ref{tes56uwe4u6} is that the family of operators, $(\rho(\Delta))_{\Delta\in\mathcal B_0(X)}$,  should possess certain correlation measures $\theta^{(n)}$, whose definition is given in Section~\ref{tew532w}. These measures $\theta^{(n)}$ are  then also the correlation measures of the point process $\mu$. We also present formal considerations about the form of the  correlation measures $\theta^{(n)}$ when $\rho(\Delta)$ is a particle density given by \eqref{raq5wu}.

In Section~\ref{xeeraq54q}, we apply Theorem~\ref{tes56uwe4u6} to show that a Poisson point process is  the joint spectral measure of the operators  $(\rho(\Delta))_{\Delta\in\mathcal B_0(X)}$, where $\rho(\Delta)$ is the particle density of the trivial quasi-free representation of the CCR in which the creation and annihilation operators are given by \eqref{teraw5uw}. 

The main results of the paper are in Section~\ref{yd6w6wdd}. Using the $\mathcal G^2$-valued function $(L_1(x),L_2(x))$ from Section~\ref{ew56u3wu}, we construct a quasi-free representation of the CCR in the symmetric Fock space $\mathcal F(\mathcal H\oplus\mathcal G)$. We prove that the corresponding particle density is well defined as a family of commuting Hermitian operators,  $(\rho(\Delta))_{\Delta\in\mathcal B_0(X)}$ (Corollary~\ref{3rtlgpr}). Theorem~\ref{due6uew4} states that these operators satisfy the assumptions of Theorem~\ref{tes56uwe4u6} and their joint spectral measure $\mu$ is the Cox process $\Pi_R$, where  $R(x)=|G(x)|^2$ and $G(x)$ is the Gaussian field as in in Theorem~\ref{reaq5y43wu}. In particular, $\mu$ is a hafnian point process.

\section{Fock-space realization of complex Gaussian fields}\label{ew56u3wu}

Let $\mathcal G$ be a separable Hilbert space with an antilinear involution $\mathcal J$ satisfying $(\mathcal Jf,\mathcal Jg)_{\mathcal G}=(g,f)_{\mathcal G}$ for all $f,g\in\mathcal G$. Let $\mathcal G^{\odot n}$ denote the $n$th symmetric tensor power of $\mathcal G$. For $n\in\mathbb N$, let $\mathcal F_n(\mathcal G):=\mathcal G^{\odot n}n!$, i.e., $\mathcal F_n(\mathcal G)$ coincides with $\mathcal G^{\odot n}$ as a set and the inner product in $\mathcal F_n(\mathcal G)$ is equal to $n!$ times the inner product in $\mathcal G^{\odot n}$. Let also $\mathcal F_0(\mathcal G):=\mathbb C$. Then $\mathcal F(\mathcal G)=\bigoplus_{n=0}^\infty\mathcal F_n(\mathcal G)$ is called the {\it symmetric Fock space over $\mathcal G$}. The vector $\Omega=(1,0,0,\dots)\in\mathcal F(\mathcal G)$ is called the {\it vacuum}. 

Let $\mathcal F_{\mathrm{fin}}(\mathcal G)$ denote the (dense) subspace of  $\mathcal F(\mathcal G)$ consisting of all finite vectors $f=(f^{(0)},f^{(1)},\dots,f^{(n)},0,0,\dots)$ ($n\in\mathbb N$). We equip $\mathcal F_{\mathrm{fin}}(\mathcal G)$ with the topology of the topological direct sum of the $\mathcal F_n(\mathcal G)$ spaces.

For topological vector spaces $V$ and $W$, we denote by $\mathcal L(V,W)$ the space of all linear continuous operators $A:V\to W$. We also denote $\mathcal L(V):=\mathcal L(V,V)$.

Let $g\in\mathcal G$. We define a {\it creation operator} $a^+(g)\in\mathcal L(\mathcal F_{\mathrm{fin}}(\mathcal G))$ by $a^+(g)\Omega:=g$, and for $f^{(n)}\in\mathcal F_n(\mathcal G)$ ($n\in\mathbb N$), 
$a^+(g)f^{(n)}:=g\odot f^{(n)}\in\mathcal F_{n+1}(\mathcal G)$ . Next, we define an {\it  annihilation operator} $a^-(g)\in\mathcal L(\mathcal F_{\mathrm{fin}}(\mathcal G))$ that satisfies $a^-(g)\Omega:=0$ and  for any $f_1,\dots,f_n\in\mathcal G$,
$$a^-(g)f_1\odot\dotsm\odot f_n=\sum_{i=1}^n(f_i,\mathcal Jg)_{\mathcal G}\,f_1\odot\dots\odot f_{i-1}\odot f_{i+1}\odot\dots\odot f_n.$$
We have $a^+(g)^*\restriction_{\mathcal F_{\mathrm{fin}}(\mathcal G)}=a^-(\mathcal Jg)$ and the operators $a^+(g)$, $a^-(g)$ satisfy the CCR:
\begin{equation}\label{tsw654w}
[a^+(f),a^+(g)]=[a^-(f),a^-(g)]=0,\quad [a^-(f),a^+(g)]=(g,\mathcal Jf)_{\mathcal G}\end{equation}
for all $f,g\in\mathcal G$.

Let $D\in\mathcal B(X)$ be such that $\sigma(X\setminus D)=0$. Let $D\ni x\mapsto (L_1(x),L_2(x))\in\mathcal G^2$ be a measurable mapping. We assume that
\begin{align}
&(L_1(x),\mathcal JL_2(y))_{\mathcal G}=(L_1(y),\mathcal JL_2(x))_{\mathcal G},\label{ydxdrdxdrg}\\
&(L_1(x),L_1(y))_{\mathcal G}=(L_2(x),L_2(y))_{\mathcal G}\quad \text{for all }x,y\in D.\label{xrsa5yw4}
\end{align}
 Define
\begin{equation}\label{waaq4yq5y}
\Phi(x):=a^+(L_1(x))+a^-(L_2(x)).\end{equation}
Let $\Psi(x):=\Phi(x)^*\restriction_{\mathcal F_{\mathrm{fin}}(\mathcal G)}$.
Then
\begin{equation}\label{ytd7r57}
\Psi(x)=a^+(\mathcal JL_2(x))+a^-(\mathcal JL_1(x)).\end{equation}
It follows from \eqref{tsw654w} that conditions \eqref{ydxdrdxdrg}, \eqref{xrsa5yw4} are necessary and sufficient in order that  $[\Phi(x),\Phi(y)]=[\Psi(x),\Psi(y)]=[\Phi(x),\Psi(y)]=0$ for all $x,y\in D$.

Below, for each $\Lambda\subset D$, we denote by $\mathbb F_\Lambda$ the subspace of the Fock space $\mathcal F(\mathcal G)$ that is the closed linear span of the set
\begin{multline*}
\big\{\Psi(x_1)^{k_1}\dotsm\Psi(x_m)^{k_m}\Phi(y_1)^{l_1}\dotsm \Phi(y_n)^{l_n}\Omega\mid x_1,\dots,x_m,y_1,\dots,y_n\in \Lambda,\\ k_1,\dots,k_m,l_1,\dots,l_n\in\mathbb N_0,\ m,n\in\mathbb N\big\}.\end{multline*}
Here $\mathbb N_0:=\{0,1,2,3,\dots\}$.

\begin{theorem}\label{reaq5y43wu}
There exists a measurable subset $\Lambda\subset D$ with $\sigma(X\setminus\Lambda)=0$ and a mean-zero complex-valued Gaussian field $\{G(x)\}_{x\in \Lambda}$  on a probability space $(\Xi,\mathfrak A,P)$ such that:

\begin{itemize}
\item[(i)] The  Gaussian field $\{G(x)\}_{x\in \Lambda}$ has the covariance
\begin{equation}\label{w909i8u7y689}
\mathcal K_1(x,y)=(L_1(x),L_1(y))_{\mathcal G},\quad x,y\in \Lambda,\end{equation}
and the pseudo-covariance 
\begin{equation}\label{ftsqw43qd}
\mathcal K_2(x,y)=(L_1(x),\mathcal JL_2(y))_{\mathcal G},\quad x,y\in \Lambda.\end{equation}

\item[(ii)] There exists a unique unitary operator $\mathcal I:\mathbb F_\Lambda\to L^2(\Xi,P)$ that satisfies
\begin{align}
&\mathcal I\Psi(x_1)^{k_1}\dotsm\Psi(x_m)^{k_m}\Phi(y_1)^{l_1}\dotsm \Phi(y_n)^{l_n}\Omega\notag\\
&\quad= \overline{G(x_1)}^{\,k_1}\dotsm\overline{G(x_m)}^{\,k_m}G(y_1)^{l_1}\dotsm G(y_n)^{l_n}\label{cy6e64u43}\end{align}
for all $x_1,\dots,x_m,y_1,\dots,y_n\in \Lambda$, $k_1,\dots,k_m,l_1,\dots,l_n\in\mathbb N_0$, $m,n\in\mathbb N$.
\end{itemize}

\end{theorem}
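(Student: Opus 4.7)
My plan is to reduce both parts of the theorem to matching $n$-point vacuum functions of the Fock operators $\Phi,\Psi$ with $n$-point moments of a complex Gaussian field, both described by the Wick/Isserlis pairing formula.

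I would first compute the four two-point vacuum expectations $\langle\Omega,A(x)B(y)\Omega\rangle$ with $A,B\in\{\Phi,\Psi\}$ directly from \eqref{waaq4yq5y}, \eqref{ytd7r57}, using $a^-(g)\Omega=0$ and the CCR \eqref{tsw654w}. After invoking \eqref{ydxdrdxdrg} and the norm equality \eqref{xrsa5yw4}, these four values come out to be $\mathcal K_2(x,y)$, $\mathcal K_1(x,y)$, $\overline{\mathcal K_1(x,y)}$, $\overline{\mathcal K_2(x,y)}$, matching the desired two-point functions of $G$ and $\overline G$. For longer products, repeated use of the CCR pushes annihilation operators through creation operators to the right and, since each annihilation operator that reaches $\Omega$ returns zero, the expectation $\langle\Omega,A_1(z_1)\cdots A_N(z_N)\Omega\rangle$ collapses to a sum over unordered pairings of $\{1,\dots,N\}$, each pair contributing the corresponding two-point function. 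The commutativity of all $\Phi(x),\Psi(y)$ noted just after \eqref{ytd7r57} makes the ordering of factors irrelevant. This is Wick's theorem on the Fock side.

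Next I would produce the Gaussian field itself. For $x_1,\dots,x_n\in D$ and complex scalars $\alpha_i,\beta_j$, set $u:=\sum_i\alpha_iL_1(x_i)$ and $v:=\sum_j\beta_j\mathcal J L_2(x_j)$. A short calculation using \eqref{xrsa5yw4} and the involution property of $\mathcal J$ shows that the quadratic form of the prospective joint covariance matrix of $(G(x_i),\overline{G(x_j)})$ evaluated at $(\alpha,\beta)$ equals $\|u+v\|_{\mathcal G}^2\geq 0$; combined with the symmetry $\mathcal K_2(x,y)=\mathcal K_2(y,x)$ from \eqref{ydxdrdxdrg}, this yields a consistent family of complex Gaussian finite-dimensional distributions. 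Kolmogorov's extension theorem then delivers a probability space $(\Xi,\mathfrak A,P)$ carrying a mean-zero complex Gaussian field $G$ with covariance $\mathcal K_1$ and pseudo-covariance $\mathcal K_2$, and Isserlis' theorem matches all its higher moments with the Fock-side pairing sums.

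Part (ii) follows by defining $\mathcal I$ on the spanning vectors in \eqref{cy6e64u43} and extending linearly: the Fock inner product of any two such vectors equals $\langle\Omega,W\Omega\rangle$ for a specific monomial $W$ in $\Phi$'s and $\Psi$'s, which Wick evaluates to a pairing sum, while the $L^2(\Xi,P)$ inner product of the corresponding Gaussian monomials equals the same pairing sum by Isserlis. Hence $\mathcal I$ is well defined and isometric on a dense subspace of $\mathbb F_\Lambda$ and extends uniquely to a unitary onto $L^2(\Xi,P)$; surjectivity uses density of polynomials in $G,\bar G$ in $L^2$ of a Gaussian measure. The main technical obstacle I anticipate is the joint measurability of $G$: Kolmogorov's theorem produces $G(x)$ only pointwise in $x$, yet \eqref{cy6e64u43} needs the products $G(x_1)^{l_1}\cdots\overline{G(x_m)}^{k_m}$ to be bona fide elements of $L^2(\Xi,P)$ with inner products matching the Fock expressions $\sigma$-almost everywhere. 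This should be addressed either by a standard separable-modification argument on a full-measure subset $\Lambda\subset D$, or by constructing $G$ directly as an isonormal-Gaussian-driven field, exploiting the measurability of $(L_1,L_2)$ to secure joint measurability automatically.
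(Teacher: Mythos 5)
Your proposal is correct, but it takes a genuinely different route from the paper's proof. You build the Gaussian field externally: you check that the augmented covariance matrix of $(G,\overline G)$ is positive semidefinite via the identity
\begin{equation*}
\sum_{i,k}\alpha_i\overline{\alpha_k}\,\mathcal K_1(x_i,x_k)+2\Re\sum_{i,l}\alpha_i\overline{\beta_l}\,\mathcal K_2(x_i,x_l)+\sum_{j,l}\beta_j\overline{\beta_l}\,\overline{\mathcal K_1(x_j,x_l)}=\|u+v\|_{\mathcal G}^2\ge 0,
\end{equation*}
which does follow from \eqref{ydxdrdxdrg}, \eqref{xrsa5yw4} and the involution property of $\mathcal J$; you then invoke Kolmogorov extension and match all mixed moments by Wick's theorem on the Fock side against Isserlis' theorem on the probabilistic side, obtaining $\mathcal I$ by isometric extension with surjectivity from the density of polynomials in the $L^2$-space of a Gaussian field. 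The paper instead stays inside the Fock space: it passes to the commuting Hermitian operators $\Re(\Phi(x))$, $\Im(\Phi(x))$, uses the bound \eqref{cr6sw6u4e} to show every finite vector is analytic, and applies the projection spectral theorem of Berezansky--Kondratiev to a countable family of these operators, so that $(\Xi,\mathfrak A,P)$ and $\mathcal I$ arise simultaneously as the joint spectral measure and spectral transform at the vacuum, Gaussianity being deduced afterwards from the form of the moments. This spectral route is exactly why the paper needs the Lusin-theorem reduction to a full-measure set $\Lambda$ with a countable dense subset on which $(L_1,L_2)$ is continuous; your route has no countability restriction, so you could take $\Lambda=D$ outright, and the measurability worry you flag at the end is not actually an obstruction to the statement as written (the products in \eqref{cy6e64u43} are automatically in $L^2(\Xi,P)$ since Gaussian variables have all moments); a measurable version of $x\mapsto G(x)$ only becomes relevant later, when $|G(x)|^2$ is integrated in $x$. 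What the paper's approach buys is a canonical $P=(E(\cdot)\Omega,\Omega)_{\mathcal F(\mathcal G)}$ and a canonical $\mathcal I$, consistent with the spectral viewpoint used throughout; what yours buys is elementarity, replacing essential self-adjointness and commuting spectral measures by a positivity check and a combinatorial identity.
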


\begin{proof} We define $L_1(x)=L_2(x)=0$ for all $x\in X\setminus D$. Then 
\begin{equation}\label{tes5w53w}
X\ni x\mapsto(L_1(x),L_2(x))\in\mathcal G^2
\end{equation}
 is measurable and satisfies \eqref{ydxdrdxdrg}, \eqref{xrsa5yw4} for all $x,y\in X$. By Lusin's theorem (see e.g.\ \cite[26.7~Theorem]{Bauer}), there exists a sequence of mutually disjoint compact sets $(\Lambda_n)_{n=1}^\infty$ such that $\sigma\big(X\setminus\bigcup_{n=1}^\infty\Lambda_n\big)=0$, and the restriction of the mapping \eqref{tes5w53w} to each $\Lambda_n$ is continuous. Denote $\Lambda:=\bigcup_{n=1}^\infty\Lambda_n$ and choose a countable subset $\Lambda'\subset\Lambda$ such that, for each $n\in\mathbb N$, the set $\Lambda'\cap\Lambda_n$ is dense in $\Lambda_n$. As easily seen by approximation, $\mathbb F_{\Lambda'}=\mathbb F_\Lambda$.

Let us consider the real and imaginary parts of the operators $\Phi(x)$: 
\begin{align}
\Re(\Phi(x))&:=\frac12(\Phi(x)+\Psi(x))=\frac12\left(a^+\big(L_1(x)+\mathcal JL_2(x)\big)+a^-\big(\mathcal JL_1(x)+L_2(x)\big)\right),\notag\\
\Im(\Phi(x))&:=\frac1{2i}(\Phi(x)-\Psi(x))=\frac1{2i}\left(a^+\big(L_1(x)-\mathcal JL_2(x)\big)-a^-\big(\mathcal JL_1(x)-L_2(x)\big)\right).\label{se5w5u33w}
\end{align}
These operators are Hermitian and commuting.

It is a standard fact that, for each $g\in\mathcal G$, 
\begin{equation}\label{cr6sw6u4e}
\|a^+(g)\|_{\mathcal L(\mathcal F_k(\mathcal G),\mathcal F_{k+1}(\mathcal G))}=\|a^-(g)\|_{\mathcal L(\mathcal F_{k+1}(\mathcal G),\mathcal F_{k}(\mathcal G))}=\sqrt{k+1}\,\|g\|_{\mathcal G}. \end{equation}
From here it easily follows that each $f\in\mathcal F_{\mathrm{fin}}(\mathcal G)$ is an analytic vector for  each $\Re(\Phi(x))$ and $\Im(\Phi(x))$ ($x\in X$), and the projection-valued measures of the closures of all these operators commute, see \cite[Chapter~5, Theorem~1.15]{BK}.

We now apply the projection spectral theorem \cite[Chapter~3, Theorems~2.6 and 3.9 and Section~3.1]{BK} to the closures of the operators $\Re(\Phi(x))$ and $\Im(\Phi(x))$ with $x\in\Lambda'$. This implies the existence of a probability space $(\Xi,\mathfrak A,P)$, real-valued random variables $G_1(x)$ and $G_2(x)$ ($x\in\Lambda'$) and a unique unitary operator $\mathcal I:\mathbb F_\Lambda\to L^2(\Xi,P)$ that satisfies
\begin{align}
&\mathcal I\,\Re(\Phi(x_1))^{k_1}\dotsm\Re(\Phi(x_m))^{k_m}\Im(\Phi(y_1))^{l_1}\dotsm \Im(\Phi(y_n))^{l_n}\Omega\notag\\
&\quad= G_1(x_1)^{\,k_1}\dotsm G_1(x_m)^{\,k_m}G_2(y_1)^{l_1}\dotsm G_2(y_n)^{l_n}
\label{xrq5y3q2u}
\end{align}
for all $x_1,\dots,x_m,y_1,\dots,y_n\in \Lambda'$, $k_1,\dots,k_m,l_1,\dots,l_n\in\mathbb N_0$, $m,n\in\mathbb N$. 

\begin{remark} In fact, $\Xi=\{\omega:\Lambda'\to\R^2\}$, $\mathfrak A$ is the cylinder $\sigma$-algebra on $\Xi$ (equivalently the countable product of the Borel $\sigma$-algebras $\mathcal B(\R)$), and   $P(\cdot)=(E(\cdot)\Omega,\Omega)_{\mathcal F(\mathcal G)}$. Here,  $E$ is the projection-valued measure on $(\Xi,\mathfrak A)$ that is constructed as the countable product of the projection-valued measures of the closures of the operators $\Re(\Phi(x))$ and  $\Im(\Phi(x))$ with $x\in\Lambda'$. Furthermore, for each $x\in\Lambda'$, $(G_i(x))(\omega)=\omega_i(x)$ for $\omega=(\omega_1,\omega_2)\in\Xi$. 
\end{remark}

Next, let $n\in\mathbb N$ and $x\in\Lambda_n\setminus\Lambda'$. Then we can find a sequence $(x_k)_{k=1}^\infty$ in $\Lambda'\cap \Lambda_n$ such that $x_k\to x$,  hence, by continuity, $(L_1(x_k),L_2(x_k))\to (L_1(x),L_2(x))$ in $\mathcal G^2$ . It follows from \eqref{xrq5y3q2u} that $(G_i(x_k))_{k=1}^\infty$ is a Cauchy sequence in $L^2(\Xi,P)$ ($i=1,2$), so we define $G_i(x):=\lim_{k\to\infty}G_i(x_k)$. Then we easily see by approximation that \eqref{xrq5y3q2u} remains true for all $x_1,\dots,x_m,y_1,\dots,y_n\in \Lambda$.

Let $Z$ be an arbitrary finite linear combination (with real coefficients) of random variables  from $\{G_1(x),\,G_2(x)\mid x\in\Lambda\}$. Then it follows from \eqref{se5w5u33w} that the moments of $Z$ are given by 
$$\mathbb E(Z^k)=\big((a^+(g)+a^-(\mathcal Jg))^k\,\Omega,\Omega\big)_{\mathcal F(\mathcal G)}$$
for some $g\in\mathcal G$. But this implies that the random variable $Z$ has a Gaussian distribution, see e.g.\ \cite[Chapter~3, Subsection~3.8]{BK}. Hence, $\{G_1(x),\,G_2(x)\mid x\in\Lambda\}$ is a Gaussian field. 

Finally, for each $x\in\Lambda$, we define $G(x):=G_1(x)+iG_2(x)$. Then  $\{G(x)\}_{x\in \Lambda}$ is a complex-valued Gaussian field. Formula \eqref{xrq5y3q2u} implies \eqref{cy6e64u43}. This, in turn, gives us the covariance and the pseudo-covariance of the Gaussian field $\{G(x)\}_{x\in \Lambda}$. 
\end{proof}

Let $\mathcal H=L^2(X,\sigma)$  and define an antilinear involution $J:\mathcal H\to \mathcal H$ by $(Jh)(x):=\overline{h(x)}$.
Let us consider a measurable mapping $x\mapsto L(x)\in\mathcal H$ defined $\sigma$-a.e.\ on $X$, and let $K(x,y):=(L(x),L(y))_{\mathcal H}$. We will now consider two examples of complex-valued Gaussian fields with covariance
$K(x,y)$.

\begin{example}\label{fyte6i47}
Let $\mathcal G=\mathcal H$, $\mathcal J=J$, and let $L_1(x)=L_2(x)=L(x)$. Obviously, conditions \eqref{ydxdrdxdrg}, \eqref{xrsa5yw4} are satisfied. Then, 
$$\Phi(x)=a^+(L(x))+a^-(L(x)),\quad \Psi(x)=a^+(JL(x))+a^-(JL(x)).$$
By Theorem~\ref{reaq5y43wu}, the corresponding Gaussian field $G(x)$ has the covariance $\mathcal K_1(x,y)=K(x,y)$ and the pseudo-covariance $\mathcal K_2(x,y)=(L(x),JL(y))_{\mathcal H}$. If $L(x)$ is a real-valued function for $\sigma$-a.a.\ $x\in X$, $\mathcal K_1(x,y)=\mathcal K_2(x,y)=K(x,y)$, while the function $K(x,y)$ is symmetric. Hence, as discussed in Introduction,  $R(x):=|G(x)|^2$ is a 2-permanental process, defined for $\sigma$-a.a.\ $x\in X$. If $L(x)$ is not real-valued on a set of positive $\sigma$ measure, then the moments of $R(x)$ 
are given by \eqref{dr6e6u43wq}, \eqref{d6e6ie4} with $\mathcal K_1(x,y)$, $\mathcal K_2(x,y)$ as above.
\end{example}

\begin{example}\label{vcrtw5y3}
Let $\mathcal G=\mathcal H\oplus\mathcal H$, $\mathcal J=J\oplus J$, and let $L_1(x)=(L(x),0)$, $L_2(x)=(0,L(x
))$. As easily seen, conditions \eqref{ydxdrdxdrg}, \eqref{xrsa5yw4} are satisfied.
We define, for each $h\in\mathcal H$, $a_1^+(h):=a^+(h,0)$, $a_2^+(h):=a^+(0,h)$ and similarly $a_1^-(h)$, $a_2^-(h)$. Then
$$\Phi(x)=a_1^+(L(x))+a_2^-(L(x)),\quad \Psi(x)=a_2^+(JL(x))+a_1^-(JL(x)).$$
For the corresponding Gaussian field $G(x)$, $\mathcal K_1(x,y)=K(x,y)$, while $\mathcal K_2(x,y)=0$, i.e., $G(x)$ is a proper Gaussian field. 
Hence, as discussed in Introduction,  $R(x):=|G(x)|^2$ is a permanental process,  defined for $\sigma$-a.a.\ $x\in X$. 
\end{example}

\begin{remark}
Let $G_1(x)$ and $G_2(x)$ be two independent copies of the Gaussian field from Example~\ref{fyte6i47}. Then, the Gaussian field $G(x)$ from Example~\ref{vcrtw5y3} can be constructed as $G(x)=\frac1{\sqrt2}\big(G_1(x)+iG_2(x)\big)$.
\end{remark}

The following example generalizes the constructions in Examples~\ref{fyte6i47} and \ref{vcrtw5y3}.

\begin{example}\label{xerwu56}
 Let $\mathcal G=\mathcal H\oplus\mathcal H$ be as in Example~\ref{vcrtw5y3} and consider a measurable mapping $x\mapsto(\alpha(x),\beta(x))\in\mathcal G^2$ defined $\sigma$-a.e.\ on $X$. Let
$$L_1(x):=\left(\frac{\alpha(x)+\beta(x)}2\,,\frac{\alpha(x)-\beta(x)}2\right),\quad L_2(x):=\left(\frac{\alpha(x)-\beta(x)}2\,,\frac{\alpha(x)+\beta(x)}2\right).$$
As easily seen, conditions \eqref{ydxdrdxdrg} and \eqref{xrsa5yw4} are satisfied. For the corresponding Gaussian field $G(x)$,
\begin{align}
\mathcal K_1(x,y)&=\frac12\big((\alpha(x),\alpha(y))_{\mathcal H}+(\beta(x),\beta(y))_{\mathcal H}\big)\notag,\\
\mathcal K_2(x,y)&=\frac12\big((\alpha(x), J\alpha(y))_{\mathcal H}-(\beta(x),J\beta(y))_{\mathcal H}\big).\notag
\end{align}
In the special case where $L(x)=\alpha(x)=\beta(x)$, this is just the construction from  Example~\ref{vcrtw5y3}. When choosing $\alpha(x)=\sqrt 2\, L(x)$ and $\beta(x)=0$, the corresponding Gaussian field $G(x)$ has the same finite-dimensional distributions as the Gaussian field from Example~\ref{fyte6i47}.
\end{example}

Let the conditions of Theorem~\ref{reaq5y43wu} be satisfied and $R(x)=|G(x)|^2$. To construct the Cox process $\Pi_R$ with correlation functions given by \eqref{xreew5u}, we further assume that, for each $\Delta\in\mathcal B_0(X)$, $\int_\Delta\mathbb E(R(x))\sigma(dx)<\infty$. By \eqref{xrsa5yw4} and \eqref{w909i8u7y689}, this is equivalent to the condition
\begin{equation}\label{vd5yw573}
\int_\Delta\|L_1(x)\|_{\mathcal G}^2\,\sigma(dx)=\int_\Delta\|L_2(x)\|_{\mathcal G}^2\,\sigma(dx)<\infty.\end{equation}

\setcounter{theorem}{2} 
\begin{example}[continued] Since $\mathcal G=\mathcal H$, we define  $L(x,y):=(L(x))(y)$. By \eqref{vd5yw573}, 
$$\int_{\Delta\times X} |L(x,y)|^2\sigma^{\otimes 2}(dx\,dy)<\infty.$$
Hence, for each $\Delta\in\mathcal B_0(X)$, we can define a Hilbert--Schmidt operator $L^\Delta$ in $\mathcal H$ with integral kernel $\chi_\Delta(x)L(x,y)$. Here $\chi_\Delta$ denotes the indicator function of the set $\Delta$. Define $K^\Delta:=L^\Delta(L^\Delta)^*$. This operator is nonnegative ($K^\Delta\ge0$),   trace-class, and has integral kernel $K^\Delta(x,y)=(L(x),L(y))_{\mathcal H}$ for $x,y\in\Delta$. (Note that $K^\Delta(x,y)$ vanishes outside $\Delta^2$). Thus, for $x,y\in\Delta$, the covariance $\mathcal K_1(x,y)$ of the Gaussian $G(x)$ is equal to $K^\Delta(x,y)$. 

Next, for a bounded linear operator $A\in\mathcal H$, we define the {\it transposed} of $A$ by $A^T:=JA^*J$. If $A$ is an integral operator with integral kernel $A(x,y)$, then $A^T$ is the integral operator with integral kernel $A^T(x,y)=A(y,x)$. Hence, for all $x,y\in\Delta$, the pseudo-covariance $\mathcal K_2(x,y)$ of the Gaussian $G(x)$ is equal to the integral kernel $Q^\Delta(x,y)$
of the operator $Q^\Delta:=L^\Delta(L^\Delta)^T$.

In the case where $L(x,y)$ is an integral kernel of a bounded linear operator $L$ in $\mathcal H$, we can define $K:=LL^*$ and $Q:=LL^T$, and $\mathcal K_1(x,y)=K(x,y)$, $\mathcal K_2(x,y)=Q(x,y)$, where $K(x,y)$ and $Q(x,y)$ are the integral kernels of the operators $K$ and $Q$, respectively.
\end{example}

\begin{example}[continued] We proceed similarly to Example~\ref{fyte6i47}. However, in this case, the moments of the Gaussian field $G(x)$ are  determined by the covariance $\mathcal K_1(x,y)$ only. Hence, if $L(x,y)$ is an integral kernel of a bounded linear operator $L$ in $\mathcal H$, the moments are determined by (the integral kernel of)  the operator $K:=LL^*$. Hence, without loss of generality, we may assume that $L=\sqrt K$, equivalently the operator $L$ is self-adjoint.
\end{example}

\setcounter{theorem}{5} 
\begin{example}[continued] Since $\mathcal G=\mathcal H$, we define  $\alpha(x,y):=(\alpha(x))(y)$ and similarly $\beta(x,y)$. 
In this case, condition \eqref{vd5yw573} means that, for each $\Delta\in\mathcal B_0(X)$,
$$\int_{\Delta\times X}(|\alpha(x,y)|^2+|\beta(x,y)|^2)\sigma^{\otimes 2}(dx\,dy)<\infty, $$
and we can proceed similarly to Example~\ref{fyte6i47}. Assume additionally that $\alpha(x,y)$ and $\beta(x,y)$ are integral kernels of operators $A,B\in\mathcal L(\mathcal H)$, respectively. Then the covariance $\mathcal K_1(x,y)$ of the Gaussian field $G(x)$ is the integral kernel of the operator $\frac12(AA^*+BB^*)$, while the pseudo-covariance $\mathcal K_2(x,y)$ is the integral kernel of the operator $\frac12(AA^T-BB^T)$.
\end{example}

\section{Quasi-free states on the CCR algebra}\label{xrwaq4q2}
In this section, we assume that  $\mathcal H$ is a separable complex Hilbert space with an antilinear involution $J$ in $\mathcal H$ 
satisfying $(Jf,Jh)_{\mathcal H}=(h,f)_{\mathcal H}$ for all $f,h\in\mathcal H$. 
Let $\mathcal V$ be a dense subspace of $\mathcal H$ that is invariant for $J$. 
Let $\mathfrak F$ be a separable Hilbert space and  $\mathfrak D$ a dense subspace of $\mathfrak F$. For each $h\in\mathcal V$, let $A^+(h),\, A^-(h):\mathfrak D\to\mathfrak D$  be linear operators  satisfying  the following assumptions: 
\begin{itemize}
\item[(i)] $A^+(h)$ and $A^-(h)$ depend linearly on $h\in\mathcal V$; 

\item[(ii)] the domain of the adjoint operator of $A^+(h)$ in $\mathfrak F$ contains $\mathfrak D$ and $A^+(h)^*\restriction\mathfrak D=A^-(Jf)$; 

\item[(iii)] the operators $A^+(h)$, $A^-(h)$ satisfy the CCR: 
\begin{equation}\label{ydey7e76}
[A^+(f),A^+(h)]=[A^-(f),A^-(h)]=0,\quad [A^-(f),A^+(h)]=(h,Jf)_{\mathcal H}\end{equation}
for all $f,h\in\mathcal V$.
  \end{itemize}

Let $\mathbb A$ denote the complex unital $*$-algebra generated by the operators $A^+(h)$, $A^-(h)$ ($h\in\mathcal V$). Let $\tau:\mathbb A\to\mathbb C$ be a state on $\mathbb A$, i.e., $\tau$ is linear, $\tau(\mathbf 1)=1$ and $\tau(a^*a)\ge0$ for all $a\in\mathbb A$. For each $h\in\mathcal V$, we define a Hermitian operator 
\begin{equation}\label{e5w7w37w2} B(h):=A^+(h)+A^-(Jh).
\end{equation}
These operators satisfy the commutation relation
\begin{equation}\label{vctsdtu6}
[B(f),B(h)]=2\Im(h,f)_{\mathcal H},\quad h,f\in\mathcal H.
\end{equation}
 Note that 
 $$A^+(h)=\frac12(B(h)-iB(ih)),\quad A^-(h)=\frac12(B(Jh)+iB(Jh)).$$
  Therefore, we can think of the algebra $\mathbb A$ as generated by the operators $B(h)$ ($h\in\mathcal V$), subject to the commutation relation \eqref{vctsdtu6}. Hence, the state $\tau$ is completely determined by the functionals $T^{(n)}:\mathcal V^n\to\mathbb C$  ($n\ge1$), where $T^{(1)}(h):=\tau(B(h))$ and
$$ T^{(n)}(h_1,\dots,h_n):=\tau\big((B(h_1)-T^{(1)}(h_1))\dotsm (B(h_n)-T(h_n))\big),\quad n\ge2.$$

The state $\tau$ is called {\it quasi-free} if, for each $n\in\mathbb N$, $T^{(2n+1)}=0$  and
$$T^{(2n)}(h_1,\dots,h_{2n})=\sum T^{(2)}(h_{i_1},h_{j_1})\dotsm T^{(2)}(h_{i_n},h_{j_n}) $$
where the summation is over all partitions $\{i_1,j_1\},\dots,\{i_n,j_n\}$ of $\{1,\dots,2n\}$ with $i_k<j_k$ ($k=1,\dots,n$), see e.g.\  \cite[Section~5.2]{BR}.

\begin{remark} \label{vtw53u}
Let $\phi:\mathcal V\to\mathbb C$ be a linear functional. 
For each $h\in\mathcal V$, we define operators $\mathbf A^+(h):=A^+(h)+\phi'(h)$ and  $\mathbf A^-(h):=A^-(h)+\phi(h)$, where $\phi'(h):=\overline{\phi(Jh)}$. The operators $\mathbf A^+(h)$, $\mathbf A^-(h)$ also satisfy the  conditions (i)--(iii) discussed above. Obviously, the algebra generated by the operators $\mathbf A^+(h)$, $\mathbf A^-(h)$ coincides with $\mathbb A$.
 If $\tau:\mathbb A\to\mathbb C$ is a quasi-free state with respect to the operators $A^+(h)$, $A^-(h)$, then  $\tau$ is also quasi-free with respect to the operators $\mathbf A^+(h)$, $\mathbf A^-(h)$.
\end{remark}

Let us now present an explicit construction of a representation of the CCR algebra $\mathbb A$ and a quasi-free state $\tau$ on it. This construction resembles the {\it Bogoliubov transformation}, see e.g.\ \cite[Subsection 5.2.2.2]{BR} or \cite[Section~4]{Berezin}\footnote{In \cite{Berezin}, a Bogoliubov transformation is called a {\it linear canonical transformation}.}.

Let $\mathcal E$ be a separable Hilbert space with an antilinear involution $\mathcal J$ satisfying $(\mathcal Jf,\mathcal Jg)_{\mathcal E}=(g,f)_{\mathcal E}$ for all $f,g\in\mathcal E$. Let $K_i\in\mathcal L(\mathcal H,\mathcal E)$ ($i=1,2$). Denote 
\begin{equation}\label{rdtsw5u4w3wq}K_i':=\mathcal JK_iJ\in\mathcal L(\mathcal H,\mathcal E)
\end{equation}
 and assume that
\begin{align}
(K'_2)^*K_1-(K_1')^*K_2&=0,\notag\\
K_2^*K_2-K_1^*K_1=1.\label{cdtwe64u53e}
\end{align}
For each $h\in\mathcal H$, we define, in the symmetric Fock space $\mathcal F(\mathcal E)$, the operators
\begin{equation}\label{tsw53w3}
A^+(h):=a^+(K_2h)+a^-(K_1h),\quad A^-(h):=a^-(K_2'h)+a^+(K_1'h)
\end{equation}
with domain $\mathcal F_{\mathrm{fin}}(\mathcal E)$. Here $a^+(\cdot)$ and $a^-(\cdot)$ are the creation and annihilation operators in  $\mathcal F(\mathcal E)$, respectively.
It follows from \eqref{tsw654w}, \eqref{cdtwe64u53e}, and \eqref{tsw53w3} that $A^+(h)$ and $A^-(h)$ satisfy the conditions (i)--(iii)   with 
$\mathcal V=\mathcal H$, $\mathfrak F=\mathcal F(\mathcal E)$, and $\mathfrak D=\mathcal F_{\mathrm{fin}}(\mathcal E)$. 

Let $\mathbb A$ denote the corresponding CCR algebra and let $\tau:
\mathbb A\to\mathbb C$ be the {\it vacuum state on $\mathbb A$}, i.e., $\tau(a)$$:=(a\Omega,\Omega)_{\mathcal F(\mathcal E)}$\,. For each $h\in\mathcal H$,
\begin{equation}\label{vcreaw53y}
B(h)=A^+(h)+A^-(Jh)=a^+((K_2+\mathcal JK_1)h)+a^-((K_1+\mathcal JK_2)h). \end{equation}	
In particular, $\tau(B(h))=0$. Hence, it easily follows from \eqref{vcreaw53y} that $\tau$ is a quasi-free state with 
\begin{equation}\label{rtwe64ue3}
T^{(2)} (f,h)=\big((K_1+\mathcal JK_2)f,(K_1+\mathcal JK_2)h\big)_{\mathcal E}\,.
\end{equation}

\begin{remark}Note that, in the classical Bogoliubov transformation, one chooses $\mathcal E=\mathcal H$. 
\end{remark}

\begin{remark}\label{vrtw52}
Choosing $\mathcal E=\mathcal H$, $K_1=1$ and $K_2=0$, we get $A^+(h)=a^+(h)$, $A^-(h)=a^-(h)$. In this case, the vacuum state is quasi-free,  with $T^{(1)}=0$ and $T^{(2)}(f,h)=(f,Jh)_{\mathcal H}$.
\end{remark}

\section{Particle density and correlation functions}\label{tew532w}

Let $\mathfrak F$ be a separable Hilbert space and let $\mathfrak D$ be a dense subspace of $\mathfrak F$.  For each $\Delta\in\mathcal B_0(X)$, let  $\rho(\Delta):\mathfrak D\to\mathfrak D$ be a linear Hermitian operator in $\mathfrak F$. We further assume that the operators $\rho(\Delta)$ commute, i.e.,  
$[\rho(\Delta_1),\rho(\Delta_2)]=0$ and for any $\Delta_1,\Delta_2\in\mathcal B_0(X)$.
Let $\mathcal A$ denote 
 the complex unital (commutative) $*$-algebra generated by $\rho(\Delta)$ ($\Delta\in\mathcal B_0(X)$). Let $\Omega$ be a fixed vector in $\mathfrak F$  with $\|\Omega\|_{\mathfrak F}=1$, and let a state $\tau:\mathcal A\to\mathbb C$ be defined by $\tau(a):=(a\Omega,\Omega)_{\mathfrak F}$ for $a\in\mathcal A$.

We define {\it Wick polynomials in $\mathcal A$} by the following recurrence formula:
\begin{align}
{:}\rho(\Delta){:}&=\rho(\Delta),\notag\\
{:}\rho(\Delta_1)\dotsm \rho(\Delta_{n+1}){:}&=\rho(\Delta_{n+1})\,
{:}\rho(\Delta_1)\dotsm \rho(\Delta_{n}){:}\notag\\
&\quad-\sum_{i=1}^n
{:}\rho(\Delta_1)\dotsm \rho(\Delta_{i-1})\rho(\Delta_i\cap\Delta_{n+1})\rho(\Delta_{i+1})\dotsm \rho(\Delta_n){:}\,,\label{dsaea78}
 \end{align}
 where $\Delta,\Delta_1,\dots,\Delta_{n+1}\in\mathcal B_0(X)$ and $n\in\mathbb N$. 
 It is easy to see that, for each permutation $\pi\in \mathfrak S_n$,
$${:}\rho(\Delta_1)\cdots \rho(\Delta_n){:} = {:}\rho(\Delta_{\pi(1)})\cdots \rho(\Delta_{\pi(n)}){:}\, .$$

We assume that, for each $n\in\mathbb N$, there exists a symmetric measure $\theta^{(n)}$ on $X^n$ that is concentrated on $X^{(n)}$ (i.e., $\theta^{(n)}(X^n\setminus X^{(n)})=0$) and satisfies\footnote{In view of formulas \eqref{dsaea78}, \eqref{6esuw6u61d}, it is natural to call $\theta^{(n)}$ the {\it $n$-th correlation measure of the family of operators} $(\rho(\Delta))_{\Delta\in\mathcal B_0(X)}$.}
\begin{equation}\label{6esuw6u61d}
\theta^{(n)}\big(\Delta_1\times\dots\times\Delta_n\big)=\frac1{n!}\,
\tau\big({:}\rho(\Delta_1)\dotsm \rho(\Delta_{n}){:}\big),\quad  \Delta_1,\dots,\Delta_{n}\in\mathcal B_0(X).\end{equation}
Furthermore, we assume that, for each $\Delta\in\mathcal B_0(X)$, there exists a constant $C_\Delta>0$ such that
\begin{equation}\label{fte76i4}
\theta^{(n)}(\Delta^n)\le  C_\Delta^n,\quad n\in\mathbb N,
\end{equation}
and for any sequence
    $\{\Delta_{l}\}_{l\in\mathbb{N}}\subset\mathcal{B}_{0}(X)$ such
    that $\Delta_{l}\downarrow\varnothing$ (i.e.,     $\Delta_1\supset\Delta_2\supset\Delta_3\supset\cdots$ and $\bigcap_{l=1}^\infty\Delta_l=\varnothing$), we have $C_{\Delta_{l}}\rightarrow
    0$ as $l\rightarrow\infty$.

\begin{theorem}[\!\!\cite{LM}] {\rm (i)} Under the above assumptions, there exists a unique point process $\mu$ in $X$ whose correlation measures are $(\theta^{(n)})_{n=1}^\infty$. 

{\rm (ii)} Let $\mathfrak D':=\{a\Omega\mid a\in\mathcal A\}$ and let $\mathfrak F'$ denote the closure of $\mathfrak D'$ in $\mathfrak F$. Then  each operator $(\rho(\Delta),\mathfrak D')$ is essentially self-adjoint in $\mathfrak F'$, and the operator-valued measures of the closures of the operators  $(\rho(\Delta),\mathfrak D')$ commute.  Furthermore, there exists a unique unitary operator $U:\mathfrak F'\to L^2(\Gamma(X),\mu)$ satisfying $U\Omega=1$ and 
\begin{equation}\label{te5w6u3e}
U(\rho(\Delta_1)\dotsm \rho(\Delta_{n})\Omega)=\gamma(\Delta_1)\dotsm\gamma(\Delta_n)\end{equation}
 for any $\Delta_1,\dots,\Delta_{n}\in\mathcal B_0(X)$ ($n\in\mathbb N$). In particular,
\begin{equation}\label{cts6wu4w5}
\tau\big(\rho(\Delta_1)\dotsm \rho(\Delta_{n})\big)= \int_{\Gamma(X)}\gamma(\Delta_1)\dotsm\gamma(\Delta_n)\,\mu(d\gamma).\end{equation}
\label{tes56uwe4u6}
\end{theorem}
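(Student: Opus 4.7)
The plan is to exploit a single combinatorial coincidence and then split the argument along the two assertions. For any configuration $\gamma\in\Gamma(X)$, define scalar Wick polynomials ${:}\gamma(\Delta_1)\dotsm\gamma(\Delta_n){:}$ by the same recursion \eqref{dsaea78} with each $\rho$ replaced by $\gamma$. A direct splitting of the sum $\gamma(\Delta_{n+1})=\sum_j\delta_{x_j}(\Delta_{n+1})$ appearing in the product $\gamma(\Delta_{n+1})\,(\gamma)_n(\Delta_1\times\dotsm\times\Delta_n)$, according to whether a new index coincides with a previously chosen one, yields
\[
\gamma(\Delta_{n+1})\,(\gamma)_n(\Delta_1\times\dotsm\times\Delta_n)=(\gamma)_{n+1}(\Delta_1\times\dotsm\times\Delta_{n+1})+\sum_{i=1}^n(\gamma)_n\bigl(\Delta_1\times\dotsm\times(\Delta_i\cap\Delta_{n+1})\times\dotsm\times\Delta_n\bigr),
\]
which has exactly the shape of \eqref{dsaea78}. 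Induction on $n$ then gives ${:}\gamma(\Delta_1)\dotsm\gamma(\Delta_n){:}=(\gamma)_n(\Delta_1\times\dotsm\times\Delta_n)$ pointwise on $\Gamma(X)$, the crucial bridge between the operator and measure sides.

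For part (i), I would invoke Lenard's realizability theorem from \cite{Lenard}: the family $(\theta^{(n)})_{n\ge 1}$ of symmetric measures on the spaces $X^{(n)}$ satisfying the bound \eqref{fte76i4} with $C_{\Delta_l}\to 0$ when $\Delta_l\downarrow\varnothing$ is exactly of the form required there, so it is realized, and uniquely so, as the sequence of correlation measures of a point process $\mu$ on $\Gamma(X)$.

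For part (ii), I would define a linear map $U$ on $\mathfrak D':=\mathcal A\Omega$ by
\[
U\bigl(\rho(\Delta_1)\dotsm\rho(\Delta_n)\Omega\bigr):=\gamma(\Delta_1)\dotsm\gamma(\Delta_n).
\]
To prove that $U$ is well-defined and isometric, it suffices to verify
\[
\tau\bigl(\rho(\Delta_1)\dotsm\rho(\Delta_m)\bigr)=\int_{\Gamma(X)}\gamma(\Delta_1)\dotsm\gamma(\Delta_m)\,\mu(d\gamma),
\]
since the $\rho(\Delta)$ are Hermitian and pairwise commute. Both sides expand by Möbius inversion of \eqref{dsaea78} into sums of Wick/factorial moments, and these agree termwise by \eqref{6esuw6u61d}, \eqref{rqay45qy4q}, and the bridge identity of the first paragraph. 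Denseness of the range of $U$ in $L^2(\Gamma(X),\mu)$ then follows because polynomials in the $\gamma(\Delta)$ are dense under the bound \eqref{fte76i4} (Carleman's criterion applied to each $\gamma(\Delta)$, combined with a monotone-class argument on the generated $\sigma$-algebra).

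The remaining and most delicate step is essential self-adjointness. Expanding $\rho(\Delta)^{2n}$ in Wick polynomials via the inversion of \eqref{dsaea78} produces Stirling-type coefficients; combining this with \eqref{6esuw6u61d} and \eqref{fte76i4} yields a bound of the form $\|\rho(\Delta)^n\Omega\|_{\mathfrak F}^2\le (2n)!\,D_\Delta^{2n}$, so $\Omega$ is an analytic vector for $\rho(\Delta)$ on $\mathfrak F'$. Since $\mathfrak D'$ is $\rho(\Delta)$-invariant and spanned by vectors of the form $\rho(\Delta_1)\dotsm\rho(\Delta_k)\Omega$, the commutativity of the family together with the analogous bound applied to $\tau(\rho(\Delta)^{2n}\rho(\Delta_1)^2\dotsm\rho(\Delta_k)^2)$ promotes analyticity of $\Omega$ to analyticity of every vector in $\mathfrak D'$. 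Nelson's theorem then gives essential self-adjointness, and commutativity of the spectral projections of the closures follows from commutativity of the $\rho(\Delta)$ on the common invariant dense set of analytic vectors. The main obstacle I anticipate is precisely this analytic-vector estimate: one must carefully track the Stirling combinatorics arising in the inversion of \eqref{dsaea78} to see that the $n!$ growth hidden in those coefficients is exactly what Nelson's criterion can absorb.
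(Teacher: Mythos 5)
You should first note that the paper itself offers no proof of Theorem \ref{tes56uwe4u6}: it is quoted from \cite{LM}, so your attempt has to be measured against that source. Your central combinatorial step --- that the recursion \eqref{dsaea78}, applied to the scalars $\gamma(\Delta)$, is solved by the falling factorials, i.e.\ ${:}\gamma(\Delta_1)\dotsm\gamma(\Delta_n){:}=(\gamma)_n(\Delta_1\times\dotsm\times\Delta_n)$ --- is correct and is exactly the bridge on which the argument of \cite{LM} rests; likewise the analytic-vector estimate $\tau(\rho(\Delta)^{2n})\le(2n)!\,D_\Delta^{2n}$, obtained from the Stirling inversion of \eqref{dsaea78} together with \eqref{6esuw6u61d} and \eqref{fte76i4} and fed into Nelson's theorem, is the right mechanism for essential self-adjointness and for the commutation of the spectral measures.

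The genuine gap is the \emph{existence} of $\mu$ in part (i), and it propagates into part (ii) because your construction of $U$ presupposes that $\mu$ is already in hand. The reference \cite{Lenard} gives only \emph{uniqueness} of a point process with prescribed correlation measures under a growth condition of the type \eqref{fte76i4}; it is not a realizability theorem. Lenard's existence criterion (which appears in other work of his, not cited here) requires a positivity condition on $(\theta^{(n)})_{n\ge1}$ that is strictly stronger than the sum-of-squares positivity $\tau(a^*a)\ge0$ which is all you actually have from the state; in infinite-dimensional moment problems positivity on squares does not by itself yield realizability, and you never verify the stronger condition. The way out --- and the route taken in \cite{LM} --- is to reverse your order: first prove essential self-adjointness and commutativity of the spectral projections (your analytic-vector argument works here without any reference to $\mu$, since $\tau(\rho(\Delta)^{2n})$ is controlled directly through \eqref{6esuw6u61d} and \eqref{fte76i4}); then apply the projection spectral theorem to the commuting family to \emph{produce} a joint spectral measure on a larger space of set functions; and finally use the concentration of each $\theta^{(n)}$ on $X^{(n)}$ together with \eqref{fte76i4} to show that this spectral measure is supported on simple configurations, i.e.\ is a point process. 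Only at that stage is \cite{Lenard} invoked, for uniqueness. If you restructure your argument in this order, the remaining steps you sketch (isometry of $U$ via matching of moments, surjectivity via Carleman determinacy and a monotone-class argument) go through.
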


We finish this section with a formal observation. 
Let again $\mathcal H=L^2(X,\sigma)$ and the antilinear involution $J$ in $\mathcal H$ be given by  $(Jf)(x):=\overline{f(x)}$. Let $A^+(h)$ and $A^-(h)$ ($h\in\mathcal V$) be  operators satisfying the CCR, and let the corresponding   operators $A^+(x)$, $A^-(x)$ ($x\in X$) be derfined by \eqref{ray45}. For each $\Delta\in\mathcal B_0(X)$, let $\rho(\Delta)$ be given by \eqref{raq5wu}.
The CCR \eqref{ydey7e76} and formulas \eqref{raq5wu}, \eqref{dsaea78} imply that, for any $\Delta_1,\dots,\Delta_{n}\in\mathcal B_0(X)$,
\begin{equation}\label{terw5yw35}
{:}\rho(\Delta_1)\dotsm \rho(\Delta_{n}){:}=\int_{\Delta_1\times\dots\times \Delta_n}A^+(x_n)\dotsm A^+(x_1)A^-(x_1)\dotsm A^-(x_n)\sigma^{\otimes n}(dx_1\dotsm dx_n).\end{equation}
Thus, the Wick polynomials correspond to the Wick normal ordering, in which all the creation operators are to the left of  all the annihilation operators. 
Hence, by \eqref{6esuw6u61d} and~\eqref{terw5yw35}, we formally obtain
\begin{align*}
&\theta^{(n)}\big(\Delta_1\times\dots\times\Delta_n\big)\notag\\
&\quad =\frac1{n!}\int_{\Delta_1\times\dots\times \Delta_n}\tau\big(A^+(x_n)\dotsm A^+(x_1)A^-(x_1)\dotsm A^-(x_n)\big)\sigma^{\otimes n}(dx_1\dotsm dx_n).\end{align*}
Therefore, by \eqref{5w738}, the point process $\mu$ from Theorem~\ref{tes56uwe4u6} has the correlation functions
$$k^{(n)}(x_1,\dots,x_n)=\tau\big(A^+(x_n)\dotsm A^+(x_1)A^-(x_1)\dotsm A^-(x_n)\big).$$

Below we will see that, in the case of a Cox process $\Pi_R$, where  $R(x)=|G(x)|^2$ and $G(x)$ is a complex-valued Gaussian field from Section~\ref{ew56u3wu}, the  above formal calculations can be given a rigorous meaning.  We will start, however, with the simpler case of a Poisson point process.

\section{Application of Theorem~\ref{tes56uwe4u6} to Poisson point processes}\label{xeeraq54q}

Recall Remark~\ref{vtw53u}. Let  $\mathcal V$ denote the (dense) subspace of $\mathcal H=L^2(X,\sigma)$ consisting of all measurable bounded (versions of) functions $h:X:\to\mathbb C$ with compact support. 
Let us fix a function $\lambda\in L^2_{\mathrm{loc}}(X,\sigma)$ and define a functional 
$\phi:\mathcal V\to\mathbb C$ by 
\begin{equation}\label{s5w5as}
\phi(h):=\int_X h(x)\lambda(x)\sigma(dx).
\end{equation}
 Note that 
 \begin{equation}\label{xse5w64ue}
 \phi'(h):=\int_X h(x)\overline{\lambda(x)}\,\sigma(dx).
 \end{equation} 
For each  $h\in\mathcal V$, we define operators $A^+(h),A^-(h)\in\mathcal L(\mathcal F_{\mathrm{fin}}(\mathcal H))$ by 
\begin{equation}\label{er4q53}
A^+(h):=a^+(h)+\phi'(h),\quad A^-(h):=a^-(h)+\phi(h).\end{equation}
 Here $a^+(h)$ and $a^-(h)$ are the  creation and annihilation operators in $\mathcal F(\mathcal H)$. By Remarks~\ref{vtw53u} and~\ref{vrtw52}, the vacuum state $\tau$ on the CCR algebra generated by $A^+(h)$, $A^-(h)$ ($h\in\mathcal V$) is quasi-free.

Let $a^+(x)$ and $a^-(x)$ be the operator-valued distributions corresponding  to $a^+(h)$ and $a^-(h)$, respectively. Then $A^+(x)=a^+(x)+\overline{\lambda(x)}$ and $A^-(x)=a^-(x)+\lambda(x)$.  Hence, the corresponding particle density takes the form 
$$\rho(x)=\lambda(x)a^+(x)+\overline{\lambda(x)}\,a^-(x)+a^+(x)a^-(x)+|\lambda(x)|^2.$$
Our next aim is to rigorously define, for each $\Delta\in\mathcal B_0(X)$, an operator $\rho(\Delta)=\int_\Delta A^+(x)A^-(x)\sigma(dx)\in\mathcal L(\mathcal F_{\mathrm{fin}}(\mathcal H))$.

 We clearly have, for each $\Delta\in\mathcal B_0(X)$,
$$\int_\Delta \lambda(x)a^+(x)\,\sigma(dx)=\int_X\chi_\Delta(x)\lambda(x)a^+(x)\,\sigma(dx)=a^+(\chi_\Delta \lambda) $$
and similarly
$$\int_\Delta \overline{\lambda(x)}\,a^-(x)\,\sigma(dx)=a^-(\chi_\Delta \overline{\lambda}).$$
(Note that $\chi_\Delta \lambda\in\mathcal H$.) Next, for $h\in\mathcal V$ and 
$f^{(n)}\in\mathcal F_n(\mathcal H)$,
$$\big(a^-(h)f^{(n)}\big)(x_1,\dots,x_{n-1})=n\int_X h(x)f^{(n)}(x,x_1,\dots,x_{n-1})\sigma(dx).$$
Hence
\begin{equation}\label{cyd64uew6wqa}
(a^-(x)f^{(n)})(x_1,\dots,x_{n-1})=nf^{(n)}(x,x_1,\dots,x_{n-1}),\end{equation}
which implies
\begin{equation}\label{w4q53aestyp}
\bigg(\int_\Delta a^+(x)a^-(x)\sigma(dx)f^{(n)}\bigg)(x_1,\dots,x_n)=\big(\chi_\Delta(x_1)+\dots+\chi_\Delta(x_n)\big)f^{(n)}(x_1,\dots,x_n).
\end{equation}
Hence, $a^0(\chi_\Delta):=\int_\Delta a^+(x)a^-(x)\sigma(dx)\in\mathcal L(\mathcal F_{\mathrm{fin}}(\mathcal H))$. The $a^0(\chi_\Delta)$ is called a {\it neutral operator}. 

Thus, for each $\Delta\in\mathcal B_0(X)$, we have rigorously defined
\begin{equation}\label{rtw52qy}
\rho(\Delta)=a^+(\chi_\Delta\lambda)+a^-(\chi_\Delta\overline{\lambda})+a^0(\chi_\Delta)+\int_\Delta|\lambda(x)|^2\sigma(dx)\in\mathcal L(\mathcal F_{\mathrm{fin}}(\mathcal H)).\end{equation}
Obviously, $\rho(\Delta)$ is a Hermitian operator in $\mathcal F(\mathcal H)$. To construct a state on the corresponding $*$-algebra, we use the vacuum $\Omega$
in the Fock space $\mathcal F(\mathcal H)$.

\begin{proposition}\label{t7re6}
The operators $(\rho(\Delta))_{\Delta\in\mathcal B_0(X)}$ defined by \eqref{rtw52qy} and the vacuum state $\tau$ satisfy the assumptions of Theorem~\ref{tes56uwe4u6}. In this case, $\theta^{(n)}=\frac1{n!}(|\lambda|^2\sigma)^{\otimes n}$, so that $\mu$ is the Poisson point process with intensity  $|\lambda(x)|^2$. Furthermore, we have $\mathfrak F'=\mathcal F(\mathcal H)$.
\end{proposition}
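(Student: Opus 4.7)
The plan is to verify each hypothesis of Theorem~\ref{tes56uwe4u6} for the family $(\rho(\Delta))_{\Delta\in\mathcal B_0(X)}$ defined by \eqref{rtw52qy} and then to identify $\mathfrak F'$ with the full Fock space. Hermiticity of $\rho(\Delta)$ on $\mathcal F_{\mathrm{fin}}(\mathcal H)$ is immediate: $(a^+(\chi_\Delta\lambda))^*\restriction\mathcal F_{\mathrm{fin}}(\mathcal H)=a^-(\chi_\Delta\overline\lambda)$ and the neutral operator $a^0(\chi_\Delta)$ is symmetric by \eqref{w4q53aestyp}. Commutativity $[\rho(\Delta_1),\rho(\Delta_2)]=0$ can be obtained directly from the CCR \eqref{tsw654w} together with the explicit action of $a^0$ on $\mathcal F_n(\mathcal H)$; equivalently, since the scalar shifts $\phi,\phi'$ in \eqref{er4q53} do not affect commutators, the operators $A^+(h),A^-(h)$ satisfy the CCR \eqref{ydey7e76} and the commutativity is that of a CCR particle density.

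The heart of the proof is the computation of $\theta^{(n)}$. I would first establish the normal-ordering identity \eqref{terw5yw35} rigorously as an operator identity on $\mathcal F_{\mathrm{fin}}(\mathcal H)$, proceeding by induction on $n$ using the recursion \eqref{dsaea78} together with the CCR \eqref{ydey7e76}; this is the main technical bookkeeping step. Once \eqref{terw5yw35} is available, the crucial observation is that on the vacuum, only the scalar part of $A^-(x)=a^-(x)+\lambda(x)$ survives, so that $A^-(x_1)\cdots A^-(x_n)\Omega=\lambda(x_1)\cdots\lambda(x_n)\,\Omega$ as an operator-valued distribution. Taking the inner product with $\Omega$ and using that $A^+(x)$ is the formal adjoint of $A^-(x)$ yields
\[
\tau\bigl(A^+(x_n)\cdots A^+(x_1)A^-(x_1)\cdots A^-(x_n)\bigr)=|\lambda(x_1)|^2\cdots|\lambda(x_n)|^2.
\]
Integrating this over $\Delta_1\times\cdots\times\Delta_n$ with respect to $\sigma^{\otimes n}$ and dividing by $n!$ produces $\theta^{(n)}=\frac{1}{n!}(|\lambda|^2\sigma)^{\otimes n}$, whose correlation functions $k^{(n)}(x_1,\dots,x_n)=|\lambda(x_1)|^2\cdots|\lambda(x_n)|^2$ identify $\mu$ as the Poisson point process with intensity $|\lambda(x)|^2$. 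The bound \eqref{fte76i4} then holds with $C_\Delta:=\int_\Delta|\lambda|^2\,d\sigma$, and $C_\Delta\to 0$ as $\Delta\downarrow\varnothing$ by dominated convergence, using $\lambda\in L^2_{\mathrm{loc}}(X,\sigma)$.

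For the identification $\mathfrak F'=\mathcal F(\mathcal H)$, the strategy is to show directly that $\mathcal A\Omega$ is dense in $\mathcal F(\mathcal H)$. Applying $\rho(\Delta_1)\cdots\rho(\Delta_n)$ to $\Omega$ and expanding via \eqref{rtw52qy}, the top-degree component in $\mathcal F_n(\mathcal H)$ equals $a^+(\chi_{\Delta_1}\lambda)\cdots a^+(\chi_{\Delta_n}\lambda)\Omega=(\chi_{\Delta_1}\lambda)\odot\cdots\odot(\chi_{\Delta_n}\lambda)$, while all remaining contributions lie in $\bigoplus_{k<n}\mathcal F_k(\mathcal H)$. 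An induction on $n$ that subtracts lower-degree pieces already known to belong to the closure of $\mathcal A\Omega$ then places every such symmetric tensor in $\overline{\mathcal A\Omega}$, after which density in $\mathcal F(\mathcal H)$ follows from the density of finite linear combinations of functions of the form $\chi_\Delta\lambda$ in $\mathcal H$.

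The step I expect to be the main obstacle is making the normal-ordering identity \eqref{terw5yw35} rigorous on $\mathcal F_{\mathrm{fin}}(\mathcal H)$, thereby upgrading the formal computation at the end of Section~\ref{tew532w} to a genuine operator identity; once this is established, the remaining verifications and the cyclic-subspace argument reduce to routine induction and approximation, and the proposition follows from Theorem~\ref{tes56uwe4u6}.
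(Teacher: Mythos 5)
Your proposal follows the paper's proof essentially step for step: commutativity from the CCR, a rigorous inductive derivation of the normal-ordering identity \eqref{terw5yw35} (the paper's inductive step is the commutation formula \eqref{cx4t3y7}), evaluation at the vacuum giving $\theta^{(n)}=\frac1{n!}(|\lambda|^2\sigma)^{\otimes n}$, and the bound \eqref{fte76i4} with $C_\Delta=\int_\Delta|\lambda|^2\,d\sigma$. The only divergence is that you actually sketch the cyclicity argument for $\mathfrak F'=\mathcal F(\mathcal H)$, which the paper declares standard and omits; note that your final density step (the span of $\{\chi_\Delta\lambda\mid\Delta\in\mathcal B_0(X)\}$ being dense in $\mathcal H$) implicitly requires $\lambda(x)\neq0$ for $\sigma$-a.a.\ $x$.
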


\begin{remark}
For the Poisson point process $\mu$ with intensity  $|\lambda|^2$,  the existence of the unitary isomorphism $U:\mathcal F(\mathcal H)\to L^2(\Gamma(X),\mu)$ that satisfies \eqref{te5w6u3e}, \eqref{cts6wu4w5} is a well-known fact, see e.g.\ \cite{Surgailis}. Our approach to the construction of the isomorphism $U$ may be compared with paper \cite{GGPS}. 
\end{remark}

\begin{proof}[Proof of Proposition \ref{t7re6}] For any $\Delta_1,\Delta_2\in\mathcal B_0(X)$, the commutation $[\rho(\Delta_1),\rho(\Delta_2)]=0$ follows from the CCR  and the commutation relations
$$\big[a^0(\chi_{\Delta_1}),a^+(\chi_{\Delta_2}\lambda)\big]=a^+(\chi_{\Delta_1\cap\Delta_2}\lambda),\quad \big[a^0(\chi_{\Delta_1}),a^-(\chi_{\Delta_2}\overline{\lambda})\big]=-a^-(\chi_{\Delta_1\cap\Delta_2}\overline{\lambda}).$$

Next, let $C\in\mathcal L(\mathcal F_{\mathrm{fin}}(\mathcal H))$. Similarly to \eqref{cyd64uew6wqa}, \eqref{w4q53aestyp}, we see that, for each $\Delta\in\mathcal B_0(X)$,  $\int_\Delta a^+(x)Ca^-(x)\sigma(dx) $  determines an operator from $\mathcal L(\mathcal F_{\mathrm{fin}}(\mathcal H))$. In particular, for $f\in\mathcal H$ and $n\in\mathbb N$,
\begin{equation}\label{dr6wu3}
\int_\Delta a^+(x)Ca^-(x)\sigma(dx)f^{\otimes n}=n(\chi_\Delta f)\odot(Cf^{\otimes(n-1)}).\end{equation}
Therefore,
\begin{align*}
&\int_\Delta A^+(x)CA^-(x)\sigma(dx)\\
&\quad=\int_\Delta a^+(x)Ca^-(x)\sigma(dx)+a^+(\chi_\Delta\lambda)C+Ca^-(\chi_\Delta\overline{\lambda})+\int_\Delta|\lambda(x)|^2\sigma(dx)\,C\in\mathcal L(\mathcal F_{\mathrm{fin}}(\mathcal H)).
\end{align*}
Hence, we may define, for any $\Delta_1,\dots,\Delta_n\in\mathcal B_0(X)$,
\begin{align*}
&\int_{\Delta_1\times\dots\times \Delta_n}A^+(x_n)\dotsm A^+(x_1)A^-(x_1)\dotsm A^-(x_n)\,\sigma^{\otimes n}(dx_1\dotsm dx_n)\\
&\quad:=\int_{\Delta_n}A^+(x_n)\bigg(\int_{\Delta_{n-1}}A^+(x_{n-1})\bigg(\dotsm\int_{\Delta_1}A^+(x_1)A^-(x_1)\sigma(dx_1)\bigg)\\
&\qquad\dotsm  A^-(x_{n-1})\sigma(dx_{n-1})\bigg)A^-(x_n)\sigma(dx_n)\in\mathcal L(\mathcal F_{\mathrm{fin}}(\mathcal H)).\end{align*}

We next state that formula \eqref{terw5yw35} now holds rigorously. Indeed, a direct calculation shows that, for any $\Delta_1,\Delta_2 \in\mathcal B_0(X)$ and $C\in\mathcal L(\mathcal F_{\mathrm{fin}}(\mathcal H))$,
\begin{align}
&\rho(\Delta_1)\int_{\Delta_2}A^+(x)CA^-(x)\,\sigma(dx)\notag\\
&\quad=
\int_{\Delta_2}A^+(x)\rho(\Delta_1)CA^-(x)\,\sigma(dx)+\int_{\Delta_1\cap\Delta_2}A^+(x)CA^-(x)\,\sigma(dx).
\label{cx4t3y7}\end{align}
Now formula \eqref{terw5yw35} follows by induction from \eqref{dsaea78} and \eqref{cx4t3y7}.

 Applying the vacuum state $\tau$ to \eqref{terw5yw35} , we get
\begin{equation}\label{ctea5ywq}
\tau\big({:}\rho(\Delta_1)\dotsm \rho(\Delta_{n}){:}\big)=\int_{\Delta_1\times\dots\times\Delta_n}|\lambda(x_1)|^2\dotsm|\lambda(x_n)|^2\sigma^{\otimes n}(dx_1\dotsm dx_n). \end{equation}
Since the measure $\sigma$ is non-atomic, $\sigma^{\otimes n}$ is concentrated on $X^{(n)}$. By \eqref{6esuw6u61d} and \eqref{ctea5ywq}, 
estimate \eqref{fte76i4} holds with $C_\Delta=\int_\Delta |\lambda(x)|^2\sigma(dx)$. Hence, the assumptions of Theorem~\ref{tes56uwe4u6} are satisfied. The form of the correlation measures implies that $\mu$ is the Poisson point process with intensity $|\lambda(x)|^2$.

Finally, the proof of the equality $\mathfrak F'=\mathcal F(\mathcal H)$ is standard and we leave it to the interested reader. \end{proof}

\section{Application of Theorem~\ref{tes56uwe4u6} to hafnian point processes}\label{yd6w6wdd}

We will use below the notations from Section~\ref{ew56u3wu}. We assume that conditions  \eqref{ydxdrdxdrg}, \eqref{xrsa5yw4}, and \eqref{vd5yw573} are satisfied. Let also the subspace $\mathcal V$ of $\mathcal H$ be as in Section~\ref{xeeraq54q}.

Let $h\in\mathcal V$. By the Cauchy inequality,
$$\int_X |h(x)|\,\|L_i(x)\|_{\mathcal G}\,\sigma(dx)<\infty,\quad i=1,2.$$
Hence, by using e.g.\ \cite[Chapter 10, Theorem~3.1]{BUS},
we can define
\begin{equation*}
\int_X hL_i\,d\sigma, \int_X h\,\mathcal JL_i\,d\sigma\in\mathcal G\end{equation*}
as  Bochner integrals. 

\begin{example}
Recall Examples \ref{fyte6i47} and \ref{vcrtw5y3}.  As easily seen, for each $h\in\mathcal V$,
\begin{equation}\label{eaw4ayaqwy}
\int_X hL\,d\sigma =(L^\Delta)^Th,\ \int_X h\, JL\,d\sigma =(L^\Delta)^*h\in\mathcal H,
\end{equation}
where $\Delta\in\mathcal B_0(X)$ is chosen so that $h$ vanishes outside $\Delta$. In particular, if $L(x,y)$ is the integral kernel of an operator $L\in\mathcal L(\mathcal H)$, then we can replace $L^\Delta$ in formula \eqref{eaw4ayaqwy} with $L$. Furthermore, in the latter case, we could set $\mathcal V=\mathcal H$.
\end{example}

Denote $\mathcal E:=\mathcal H\oplus \mathcal G$. We recall the well-known unitary isomorphism between\linebreak  $\mathcal F(\mathcal H)\otimes\mathcal F(\mathcal G)$ and $\mathcal F(\mathcal E)$. In view of our considerations in Sections~\ref{ew56u3wu} and \ref{xeeraq54q}, see, in particular, formulas \eqref{waaq4yq5y}, \eqref{ytd7r57}, and \eqref{s5w5as}--\eqref{er4q53},
we consider in $\mathcal F(\mathcal E)$  the following linear operators with domain $\mathcal F_{\mathrm{fin}}(\mathcal E)$,
\begin{align}
A^+(h):=&a^+\bigg(h,\int_X h\,\mathcal JL_2\,d\sigma\bigg)+a^-\bigg(0,\int_X h\,\mathcal JL_1\,d\sigma\bigg),\notag\\
A^-(h):=&a^+\bigg(0,\int_X h L_1\,d\sigma\bigg)+a^-\bigg(h,\int_X hL_2\,d\sigma\bigg),\quad h\in\mathcal V.\label{tdr65eS}
\end{align}

\begin{proposition}
The operators $A^+(h)$, $A^-(h)$  defined by \eqref{tdr65eS} satisfy the conditions (i)--(iii) from Section~\ref{xrwaq4q2} with $\mathfrak F=\mathcal F(\mathcal E)=\mathcal F(H\oplus \mathcal G)$ and $\mathfrak D=\mathcal F_{\mathrm{fin}}(\mathcal E)$.  The vacuum state on the corresponding CCR algebra  is quasi-free with $T^{(1)}=0$ and
\begin{align}
T^{(2)}(f,h)&=\int_X \overline{f(x)}\, h(x)\sigma(dx)\notag\\
&\quad+2\int_{X^2}\Re\left(f(x)h(y)\overline{\mathcal K_2(x,y)}+\overline{f(x)}\,h(y)\mathcal K_1(x,y)\right)\sigma^{\otimes 2}(dx\,dy).
\end{align}
Here $\mathcal K_1$ and $\mathcal K_2$ are defined by \eqref{w909i8u7y689} and \eqref{ftsqw43qd}, respectively. 
\end{proposition}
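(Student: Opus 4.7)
The strategy is to recognize that \eqref{tdr65eS} is an instance of the Bogoliubov-type construction of Section~\ref{xrwaq4q2}, with the ``one-particle'' space $\mathcal E=\mathcal H\oplus\mathcal G$ carrying the antilinear involution $\widetilde{\mathcal J}:=J\oplus\mathcal J$. Define linear maps $K_{1},K_{2}:\mathcal V\to\mathcal E$ by
\begin{align*}
K_{1}h:=\Big(0,\int_{X}h\,\mathcal JL_{1}\,d\sigma\Big),\qquad
K_{2}h:=\Big(h,\int_{X}h\,\mathcal JL_{2}\,d\sigma\Big),
\end{align*}
so that, with $K_{i}':=\widetilde{\mathcal J}K_{i}J$, one has $K_{1}'h=\big(0,\int hL_{1}\,d\sigma\big)$ and $K_{2}'h=\big(h,\int hL_{2}\,d\sigma\big)$. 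A direct check shows that \eqref{tdr65eS} is exactly $A^{+}(h)=a^{+}(K_{2}h)+a^{-}(K_{1}h)$ and $A^{-}(h)=a^{-}(K_{2}'h)+a^{+}(K_{1}'h)$, the form used in \eqref{tsw53w3}.

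Next I would verify the two algebraic identities \eqref{cdtwe64u53e} on $\mathcal V$. For $K_{2}^{*}K_{2}-K_{1}^{*}K_{1}=1$, pair with $f,h\in\mathcal V$, use Fubini to convert each $\mathcal G$-inner product of Bochner integrals into a double integral of $(\mathcal JL_{i}(x),\mathcal JL_{i}(y))_{\mathcal G}=(L_{i}(y),L_{i}(x))_{\mathcal G}$, and observe that the identity contribution in the $\mathcal H$-slot of $K_{2}$ gives the $(h,f)_{\mathcal H}$ term while the two $\mathcal G$-integrals cancel by \eqref{xrsa5yw4}. For $(K_{2}')^{*}K_{1}-(K_{1}')^{*}K_{2}=0$, the same calculation reduces the claim to $(\mathcal JL_{1}(x),L_{2}(y))=(\mathcal JL_{2}(x),L_{1}(y))$, which via the general identity $(\mathcal Ja,b)=(\mathcal Jb,a)$ is equivalent to the symmetry $\mathcal K_{2}(x,y)=\mathcal K_{2}(y,x)$, and this is precisely \eqref{ydxdrdxdrg}. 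Once \eqref{cdtwe64u53e} is established on $\mathcal V$, properties (i)--(iii) are automatic from the construction surrounding \eqref{tsw53w3}: (i) is the linearity of $h\mapsto K_{i}h$, (ii) follows from $a^{+}(g)^{*}\restriction=a^{-}(\widetilde{\mathcal J}g)$ and a short computation, and (iii) is the standard verification of the CCR for the Bogoliubov-type operators using \eqref{tsw654w} and \eqref{cdtwe64u53e}.

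For the quasi-free state, first note $T^{(1)}(h)=(B(h)\Omega,\Omega)_{\mathcal F(\mathcal E)}=0$ since $B(h)\Omega$ lies in $\mathcal F_{1}(\mathcal E)\perp\Omega$. From \eqref{vcreaw53y} we obtain $B(h)\Omega=(K_{2}+\widetilde{\mathcal J}K_{1})h=(h,\psi_{+}(h))$ with $\psi_{+}(h):=\int h\,\mathcal JL_{2}\,d\sigma+\int\overline h\,L_{1}\,d\sigma$. Quasi-freeness (all odd moments vanish and even moments factor over pairings) is then the standard fact for the vacuum state in this Bogoliubov setting: every $\tau(B(h_{1})\dotsm B(h_{n}))$ is computed by moving all $a^{-}$'s to the right via $[a^{-},a^{+}]=(\,\cdot\,,\widetilde{\mathcal J}\cdot)_{\mathcal E}$, and each nonvanishing contribution is a product of two-point functions, which is Wick's theorem. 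Finally, by \eqref{rtwe64ue3},
\begin{align*}
T^{(2)}(f,h)=(B(h)\Omega,B(f)\Omega)_{\mathcal E}=(h,f)_{\mathcal H}+(\psi_{+}(h),\psi_{+}(f))_{\mathcal G}.
\end{align*}
The first term is $\int_{X}\overline{f(x)}h(x)\,\sigma(dx)$. Expanding the second term into its four Bochner cross-terms and applying $(\mathcal Ja,\mathcal Jb)=(b,a)$ along with \eqref{w909i8u7y689}--\eqref{ftsqw43qd} and the symmetry of $\mathcal K_{2}$, the $L_{1}\mathcal JL_{2}$-mixed pair yields $\int\!\!\int[f(x)h(y)\overline{\mathcal K_{2}(x,y)}+\overline{f(x)h(y)}\mathcal K_{2}(x,y)]\,d\sigma^{\otimes 2}$ after relabelling $x\leftrightarrow y$, while the $L_{1}L_{1}$ and $L_{2}L_{2}$ pair collapses via \eqref{xrsa5yw4} and Hermitian symmetry of $\mathcal K_{1}$ to $\int\!\!\int[\overline{f(x)}h(y)\mathcal K_{1}(x,y)+f(x)\overline{h(y)}\overline{\mathcal K_{1}(x,y)}]\,d\sigma^{\otimes 2}$; recognising each bracket as twice the real part gives the claimed formula.

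The main obstacle is bookkeeping: one has to consistently track the antilinear involution $\widetilde{\mathcal J}$ on the direct sum, the inner-product convention (linear in the first slot), and the two different symmetries $\mathcal K_{2}(x,y)=\mathcal K_{2}(y,x)$ and $\mathcal K_{1}(y,x)=\overline{\mathcal K_{1}(x,y)}$, in order to collapse the eight-term expansion of $(\psi_{+}(h),\psi_{+}(f))_{\mathcal G}$ into the two real-part integrals in the stated form.
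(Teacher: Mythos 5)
Your proposal is correct and follows essentially the same route as the paper: the paper likewise treats conditions (i)--(iii) as immediate from the commutation relations guaranteed by \eqref{ydxdrdxdrg}--\eqref{xrsa5yw4} (equivalently, your identities \eqref{cdtwe64u53e} restricted to $\mathcal V$), computes $B(h)\Omega=\big(h,\int_X(h\,\mathcal JL_2+(Jh)L_1)\,d\sigma\big)$, and reads off $T^{(2)}$ from \eqref{w909i8u7y689} and \eqref{ftsqw43qd}. The only caveat is that the Bogoliubov construction of Section~\ref{xrwaq4q2} is stated for bounded $K_i\in\mathcal L(\mathcal H,\mathcal E)$ while here the maps $h\mapsto\int_X hL_i\,d\sigma$ are a priori defined only on $\mathcal V$ (compare Remark~\ref{yre7i645i7r45e}); since you carry out all verifications directly on $\mathcal V$, this is harmless.
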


\begin{proof} The first statement of the proposition is obvious in view of the commutation of the operators $\Phi(x)$, $\Psi(x)$. By \eqref{e5w7w37w2} and \eqref{tdr65eS},
\begin{align*}
B(h)&=a^+\bigg(h,\int_X (h\,\mathcal JL_2+(Jh)\, L_1)d\sigma\bigg)+a^-\bigg(Jh, \int_X(h\,\mathcal JL_1+(Jh)\, L_2)d\sigma\bigg).
\end{align*}
Hence, by \eqref{ydxdrdxdrg}, \eqref{xrsa5yw4}, \eqref{w909i8u7y689}, and \eqref{ftsqw43qd},  the second statement of the proposition also follows.
\end{proof}

\begin{remark}\label{yre7i645i7r45e}
 Assume that, for $i=1,2$, the map $\mathcal V\ni h\mapsto \int_X hL_i\,d\sigma\in\mathcal G$ extends by continuity to a bounded linear operator $\mathbb L_i\in\mathcal L(\mathcal H,\mathcal G)$. Then, by \eqref{rdtsw5u4w3wq} and \eqref{tdr65eS},
$$A^+(h)=a^+(h,\mathbb L_2'h)+a^-(0,\mathbb L_1'h),\quad A^-(h)=a^+(0,\mathbb L_1h)+a^-(h,\mathbb L_2h).$$
This quasi-free representation of the CCR is a special case of \eqref{cdtwe64u53e}, \eqref{tsw53w3}. By \eqref{rtwe64ue3}, 
$$ T^{(2)}(f,h)=(h,f)_{\mathcal H}+\big((\mathcal J\mathbb L_1+\mathbb L_2)Jf,(\mathcal J\mathbb L_1+\mathbb L_2)Jh\big)_{\mathcal G}.$$
\end{remark}

By \eqref{tdr65eS}, the corresponding operator-valued distributions $A^+(x)$ and $A^-(x)$ are given by
\begin{align}
A^+(x)&=a_1^+(x)+a_2^+(\mathcal JL_2(x))+a_2^-(\mathcal JL_1(x)),\notag\\
A^-(x)&=a_1^-(x)+a_2^+(L_1(x))+a_2^-(L_2(x)), \label{ytyuryr}\end{align}
compare with \eqref{waaq4yq5y}, \eqref{ytd7r57}. The operators $a_i^\pm(\cdot)$ ($i=1,2$) are defined similarly to Example~\ref{vcrtw5y3}.

\begin{proposition}\label{cyse6ui5i} Let $A^+(x)$, $A^-(x)$ be given by \eqref{ytyuryr}, and let $C\in\mathcal L(\mathcal F_{\mathrm{fin}}(\mathcal E))$. For each $\Delta\in\mathcal B_0(X)$,  $\int_\Delta A^+(x)CA^-(x)\,\sigma(dx) $  determines an operator from $\mathcal L(\mathcal F_{\mathrm{fin}}(\mathcal E))$, in the sense explained in the proof.  \end{proposition}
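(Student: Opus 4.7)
Substituting the expansions (\ref{ytyuryr}) into $A^+(x)CA^-(x)$ decomposes the integrand into nine summands of the form $B_1(x)\,C\,B_2(x)$, where each $B_i(x)$ is either a distributional factor $a_1^\pm(x)$ or an honest operator $a_2^+(\mathcal JL_2(x))$, $a_2^-(\mathcal JL_1(x))$, $a_2^+(L_1(x))$, or $a_2^-(L_2(x))$. For the latter ``good'' factors, the operator norm bound (\ref{cr6sw6u4e}) restricted to $\mathcal F_{\le N}(\mathcal E)$ grows only like $\sqrt{N+1}\,\|L_i(x)\|_{\mathcal G}$, and by (\ref{vd5yw573}) the functions $\|L_i(\cdot)\|_{\mathcal G}$ lie in $L^2(\Delta,\sigma)$. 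The plan is to handle the four cases separately, depending on which sides carry distributional factors.

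\textbf{Case 1: neither $B_1$ nor $B_2$ is distributional (four summands).} Here $x\mapsto B_1(x)CB_2(x)$ is an operator-valued function on $\Delta$, bounded in norm on $\mathcal F_{\le N}(\mathcal E)$ by $c_N\|C\|\,\|L_i(x)\|_{\mathcal G}\|L_j(x)\|_{\mathcal G}$ for some $i,j\in\{1,2\}$. By Cauchy–Schwarz and (\ref{vd5yw573}), this is integrable on $\Delta$, so the Bochner integral in $\mathcal L(\mathcal F_{\le N}(\mathcal E),\mathcal F_{\le N+2}(\mathcal E))$ exists for every $N$ and depends consistently on $N$.

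\textbf{Case 2: only $B_2(x)=a_1^-(x)$ is distributional (two summands).} For $f\in\mathcal F_n(\mathcal E)$, the Fock-space calculation
$$\int_\Delta\|a_1^-(x)f\|_{\mathcal F(\mathcal E)}^{\,2}\,\sigma(dx)\le n\,\|f\|_{\mathcal F(\mathcal E)}^{\,2}$$
shows that $x\mapsto a_1^-(x)f$ is strongly measurable and lies in $L^2(\Delta,\sigma;\mathcal F_{n-1}(\mathcal E))$. Combined with the $L^2$ bound on $B_1(x)$, the map $x\mapsto B_1(x)Ca_1^-(x)f$ is Bochner-integrable in $\mathcal F_{\le N+1}(\mathcal E)$.

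\textbf{Case 3: only $B_1(x)=a_1^+(x)$ is distributional (two summands).} Since $a_1^+(x)$ has no pointwise meaning, we define the integral by duality. For $f,g\in\mathcal F_{\mathrm{fin}}(\mathcal E)$ set
$$\Bigl(g,\int_\Delta a_1^+(x)\,C\,B_2(x)f\,\sigma(dx)\Bigr):=\int_\Delta\bigl(a_1^-(x)g,\,CB_2(x)f\bigr)_{\mathcal F(\mathcal E)}\,\sigma(dx),$$
which is absolutely convergent by the previous $L^2$-bound on $a_1^-(x)g$ together with Case~1-style estimates on $CB_2(x)f$. The bound is linear in $\|f\|$ and antilinear and bounded in $g$ at each fixed level, so Riesz yields a well-defined vector in $\mathcal F_{\le N+1}(\mathcal E)$.

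\textbf{Case 4: $a_1^+(x)Ca_1^-(x)$.} Combining Cases 2 and 3, define
$$\Bigl(g,\int_\Delta a_1^+(x)\,C\,a_1^-(x)f\,\sigma(dx)\Bigr):=\int_\Delta\bigl(a_1^-(x)g,\,C a_1^-(x)f\bigr)_{\mathcal F(\mathcal E)}\,\sigma(dx),$$
which by Cauchy–Schwarz is bounded by $\|C\|\sqrt{mn}\,\|f\|\,\|g\|$ for $g\in\mathcal F_m$, $f\in\mathcal F_n$. Riesz again gives a vector, and the resulting operator preserves $\mathcal F_{\mathrm{fin}}(\mathcal E)$ with values in $\mathcal F_{\le N}(\mathcal E)$ for appropriate $N$.

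In each case the resulting linear map from $\mathcal F_n(\mathcal E)$ into some $\mathcal F_{\le N(n,C)}(\mathcal E)$ is continuous, hence continuous on $\mathcal F_{\mathrm{fin}}(\mathcal E)$ with its topological-direct-sum topology; summing the nine pieces gives $\int_\Delta A^+(x)CA^-(x)\,\sigma(dx)\in\mathcal L(\mathcal F_{\mathrm{fin}}(\mathcal E))$. The main obstacle is of a conceptual rather than computational nature: because $a_1^\pm(x)$ carry no pointwise meaning, one cannot integrate $A^+(x)CA^-(x)$ term by term as an operator-valued function, and the key step is choosing the correct interpretation of the integral (strong Bochner vs.\ Riesz/sesquilinear) for each of the four patterns above, verifying that the $L^2$-integrability of $\|L_i(\cdot)\|_{\mathcal G}$ and of $\|a_1^-(\cdot)f\|$ combine via Cauchy–Schwarz to give absolute convergence.
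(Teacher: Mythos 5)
Your argument is correct and follows the paper's overall strategy: expand $A^+(x)CA^-(x)$ via \eqref{ytyuryr} into nine summands, treat the four summands containing no $a_1^{\pm}(x)$ factor as operator-valued Bochner integrals using the norm bound \eqref{cr6sw6u4e} together with the $L^2$-condition \eqref{vd5yw573}, and give the remaining five a special interpretation. Where you genuinely diverge is in that interpretation. The paper defines each term containing $a_1^+(x)$ or $a_1^-(x)$ by an explicit closed formula --- e.g.\ $\int_\Delta a_1^+(x)Ca_2^-(L_2(x))\sigma(dx)\,f^{\otimes(n+1)}=(n+1)\big(L_2^\Delta g\big)\odot(Cf^{\otimes n})$ with $(L_2^\Delta g)(x)=\chi_\Delta(x)(L_2(x),\mathcal Jg)_{\mathcal G}$, the symmetrization formula $P_{m+1}(1_{\mathcal E}\otimes(CP_n))\big((\chi_\Delta(\cdot)L_1(\cdot))\otimes f^{(n-1)}\big)$ for $a_1^+(x)Ca_2^+(L_1(x))$, and the neutral-operator analogy \eqref{dr6wu3} for $a_1^+(x)Ca_1^-(x)$ --- and then records each such operator as a strongly convergent double series over orthonormal bases, \eqref{r5w5w32}, \eqref{fys6w6u4}--\eqref{cxrw53ew}. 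You instead handle $a_1^-(x)$ on the right via the bound $\int_\Delta\|a_1^-(x)f\|^2\,\sigma(dx)\le n\|f\|^2$ (yielding a vector-valued Bochner integral) and $a_1^+(x)$ on the left via a sesquilinear form and the Riesz representation; the two definitions agree, as one checks on product vectors. Your route is more uniform and avoids introducing the auxiliary operators $L_2^\Delta$ and $\mathbb L_i^\Delta$, but it is worth noting what the paper's explicit basis expansions buy: they are precisely what is used afterwards, in the proof of Proposition~\ref{3rtlgpr}, to multiply two such integrals term by term and verify $[\rho(\Delta_1),\rho(\Delta_2)]=0$. With only the duality definition in hand, that commutation step would require additional work.
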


\begin{proof} It is sufficient to prove the statement when $C\in\mathcal L(\mathcal F_n(\mathcal E),\mathcal F_m(\mathcal E))$. We also fix $\Delta\in\mathcal B_0(X)$.
By \eqref{cr6sw6u4e} and \eqref{vd5yw573}, 
\begin{align}
&\int_\Delta \|a_2^-(\mathcal JL_1(x))Ca_2^+(L_1(x))\|_{\mathcal L(\mathcal F_{n-1}(\mathcal E),\mathcal F_{m-1}(\mathcal E))}\,\sigma(dx)\notag\\
&\quad\le\|C\|_{\mathcal L(\mathcal F_{n}(\mathcal E),\mathcal F_{m}(\mathcal E))}\sqrt{nm}\,\int_{\Delta}\|L_1(x)\|_{\mathcal G}^2 \,\sigma(dx)<\infty.\notag\end{align}
Hence, by \cite[Chapter 10, Theorem~3.1]{BUS}, the following Bochner integral is well-defined: 
\begin{equation*}
\int_\Delta a_2^-(\mathcal JL_1(x))C a_2^+(L_1(x))\sigma(dx)\in \mathcal L(\mathcal F_{n-1}(\mathcal E),\mathcal F_{m-1}(\mathcal E)).\end{equation*}
Note that, by e.g.\ \cite[Chapter 10, Theorem~3.2]{BUS}, for each  $f^{(n-1)}\in\mathcal F_{n-1}(\mathcal E)$,
$$\bigg(\int_\Delta a_2^-(\mathcal JL_1(x))C a_2^+(L_1(x))\sigma(dx)\bigg)f^{(n-1)}=\int_\Delta a_2^-(\mathcal JL_1(x))C a_2^+(L_1(x))f^{(n-1)}\sigma(dx),$$
where the right hand side is a Bochner integral with values in $\mathcal F_{m-1}(\mathcal E)$. 
 The proof of existence of the other Bochner integrals of the type
 $\int_\Delta a_2^\pm(\mathcal JL_i(x))Ca_2^\pm(L_j(x))\sigma(dx)$ ($i,j\in\{1,2\}$) is similar.
 
  Next, we define
$$\int_\Delta a_1^+(x)Ca_1^-(x)\sigma(dx)\in\mathcal L(\mathcal F_{n+1}(\mathcal E),\mathcal F_{m+1}(\mathcal E))$$
by analogy \eqref{dr6wu3}. 
Let $(e_i)_{i\ge1}$ be an orthonormal basis in $\mathcal H$ such that $Je_i=e_i$ for all $i\ge1$. As easily seen,
\begin{equation}\label{r5w5w32}
  \int_\Delta a_1^+(x)Ca_1^-(x)\sigma(dx)=\sum_{i,j\ge1}(\chi_\Delta e_i,e_j)_{\mathcal H}\,a_1^+(e_j)Ca_1^-(e_i),\end{equation}
where the series converges strongly in $\mathcal L(\mathcal F_{n+1}(\mathcal E),\mathcal F_{m+1}(\mathcal E))$.

By \eqref{vd5yw573},  we can define a linear operator $L_2^\Delta\in\mathcal L(\mathcal G,\mathcal H)$ by
$$(L_2^\Delta g)(x):=\chi_\Delta(x)(L_2(x),\mathcal Jg)_{\mathcal G}\,.$$
By analogy with \eqref{dr6wu3}, we define
\begin{equation}\label{cw5uu5}
\int_\Delta a_1^+(x)Ca_2^-(L_2(x))\sigma(dx)\in\mathcal L(\mathcal F_{n+1}(\mathcal E),\mathcal F_{m+1}(\mathcal E))\end{equation}
that satisfies, for each $f=(h,g)\in\mathcal E$,
\begin{equation}\label{vctsw5w3}
\int_\Delta a_1^+(x)Ca_2^-(L_2(x))\sigma(dx)f^{\otimes (n+1)}=(n+1)\big(L_2^\Delta g\big)\odot(Cf^{\otimes n}).\end{equation}
Let $(u_j)_{j\ge1}$ be an orthonormal basis in $\mathcal G$ such that $\mathcal Ju_j=u_j$ for all $j\ge1$. Then, similarly to \eqref{r5w5w32}, we obtain
\begin{equation}\label{fys6w6u4}
\int_\Delta a_1^+(x)Ca_2^-(L_2(x))\sigma(dx)=\sum_{i,j\ge1}(L_2^\Delta u_j,e_i)_{\mathcal H}\,a_1^+(e_i)Ca_2^-(u_j),
\end{equation}
where the series converges strongly in $\mathcal L(\mathcal F_{n+1}(\mathcal E),\mathcal F_{m+1}(\mathcal E))$.

Next, we note that $\mathcal H\otimes\mathcal G=L^2(X\to\mathcal G,\sigma)$. Hence, by \eqref{vd5yw573}, $\chi_\Delta(\cdot)L_1(\cdot)\in \mathcal H\otimes\mathcal G$.
Note also that $\mathcal H\otimes\mathcal G$ is a subspace of $\mathcal E^{\otimes 2}=(\mathcal H\oplus\mathcal G)^{\otimes 2}$. For each $m\in\mathbb N$, we denote by $P_m:\mathcal E^{\otimes m}\to\mathcal E^{\odot m}$ the symmetrization operator.  We naturally set, for each $f^{(k)}\in\mathcal F_{k}(\mathcal E)$,
$$\int_\Delta a_1^+(x)a_2^+(L_1(x))\sigma(dx)f^{(k)}=P_{k+2}\big((\chi_\Delta(\cdot)L_1(\cdot))\otimes f^{(k)}\big).$$
Hence, we define
$$\int_\Delta a_1^+(x)Ca_2^+(L_1(x))\sigma(dx)\in\mathcal L(\mathcal F_{n-1}(\mathcal E),\mathcal F_{m+1}(\mathcal E))$$
by
$$\int_\Delta a_1^+(x)Ca_2^+(L_1(x))\sigma(dx)f^{(n-1)}:=P_{m+1}(1_{\mathcal E}\otimes (CP_n))\big(
(\chi_\Delta(\cdot)L_1(\cdot))
\otimes f^{(n-1)}\big) $$
for $f^{(n-1)}\in\mathcal F_{n-1}(\mathcal E)$. Here $1_\mathcal E$ is the identity operator in $\mathcal E$. Therefore, 
\begin{equation}\label{s4qy}
\int_\Delta a_1^+(x)Ca_2^+(L_1(x))\sigma(dx)=\sum_{i,j\ge1}(\chi_\Delta(\cdot)L_1(\cdot)
,e_i\otimes u_j)_{\mathcal H\otimes\mathcal G}\,a_1^+(e_i)Ca_2^+(u_j),
\end{equation}
where the series converges strongly in $\mathcal L(\mathcal F_{n-1}(\mathcal E),\mathcal F_{m+1}(\mathcal E))$.

Similarly to Remark \ref{yre7i645i7r45e}, for $i=1,2$, we define $\mathbb L_i^\Delta\in\mathcal L(\mathcal H,\mathcal G)$ by 
$$\mathbb L_i^\Delta h:=\int_\Delta h(x)L_i(x)\sigma(dx),\quad h\in\mathcal H$$ (in the sense of Bochner integration).

Similarly to \eqref{cw5uu5}, \eqref{vctsw5w3}, we define 
$$\int_\Delta a_2^+ (\mathcal JL_2(x))Ca_1^-(x)\sigma(dx)\in\mathcal L(\mathcal F_{n+1}(\mathcal E),\mathcal F_{m+1}(\mathcal E))$$
by
$$\int_\Delta a_2^+ (\mathcal JL_2(x))Ca_1^-(x)\sigma(dx) f^{\otimes(n+1)}:=(n+1)\big((\mathbb L_2^\Delta)' h\big)
\odot (Cf^{\otimes n}),\quad f=(h,g)\in\mathcal E. $$
Similarly to \eqref{fys6w6u4},
\begin{equation}\label{vctesw5y}
\int_\Delta a_2^+ (\mathcal JL_2(x))Ca_1^-(x)\sigma(dx)=\sum_{i,j\ge1}\big((\mathbb L_2^\Delta)' e_i,u_j\big)_{\mathcal G}\,
a_2^+(u_j)Ca_1^-(e_i),
\end{equation}
where the series converges strongly in $\mathcal L(\mathcal F_{n+1}(\mathcal E),\mathcal F_{m+1}(\mathcal E))$.

 Finally, we define
$$\int_\Delta a_2^- (\mathcal JL_1(x))Ca_1^-(x)\sigma(dx)\in\mathcal L(\mathcal F_{n+1}(\mathcal E),\mathcal F_{m-1}(\mathcal E))$$
by
$$\int_\Delta a_2^- (\mathcal JL_1(x))Ca_1^-(x)\sigma(dx) f^{\otimes(n+1)}
:=(n+1)a^-_2\big((\mathbb L_1^\Delta)'h\big)(Cf^{\otimes n}),\quad f=(h,g)\in\mathcal E.$$
Hence,
\begin{equation}\label{cxrw53ew}
\int_\Delta a_2^- (\mathcal JL_1(x))Ca_1^-(x)\sigma(dx)=\sum_{i,j\ge1}
\big((\mathbb L_1^\Delta)'e_i,u_j\big)_{\mathcal G}
\,a_2^-(u_j)Ca_1^-(e_i),
\end{equation}
where the series converges strongly in $\mathcal L(\mathcal F_{n+1}(\mathcal E),\mathcal F_{m-1}(\mathcal E))$. 
\end{proof}

\begin{proposition}\label{3rtlgpr}
For each $\Delta\in\mathcal B_0(X)$, the particle density $\rho(\Delta)=\int_\Delta A^+(x)A^-(x)\,\sigma(dx)$ is a well-defined  Hermitian operator in $\mathcal F(\mathcal E)$ with domain $\mathcal F_{\mathrm{fin}}(\mathcal E)$. Furthermore, for any $\Delta_1,\Delta_2\in\mathcal B_0(X)$, $[\rho(\Delta_1)$, $\rho(\Delta_2)]=0$. \end{proposition}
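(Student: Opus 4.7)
The plan is as follows. First, I would invoke Proposition \ref{cyse6ui5i} with $C$ equal to the identity on each $n$-particle subspace $\mathcal{F}_n(\mathcal{E})$: expanding $A^+(x)A^-(x)$ via \eqref{ytyuryr} yields a sum of nine products $a_i^{\epsilon_1}(\cdot)a_j^{\epsilon_2}(\cdot)$ with $i,j\in\{1,2\}$ and $\epsilon_1,\epsilon_2\in\{+,-\}$, and each of the corresponding integrals $\int_\Delta a_i^{\epsilon_1}(\cdot)a_j^{\epsilon_2}(\cdot)\sigma(dx)$ is precisely one of those whose existence (as a Bochner integral on each $\mathcal{L}(\mathcal{F}_n(\mathcal{E}),\mathcal{F}_m(\mathcal{E}))$) was established in the proof of Proposition \ref{cyse6ui5i}. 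Their sum therefore defines $\rho(\Delta)\in\mathcal{L}(\mathcal{F}_{\mathrm{fin}}(\mathcal{E}))$.

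For the Hermitian property, I would use $a^+(g)^*{\restriction}\,\mathcal{F}_{\mathrm{fin}}(\mathcal{E})=a^-(\mathcal{J}g)$ together with the fact that adjunction commutes with Bochner integration inside each block $\mathcal{L}(\mathcal{F}_n(\mathcal{E}),\mathcal{F}_m(\mathcal{E}))$. Sorting the nine integrals, three are individually self-adjoint, namely $\int_\Delta a_1^+(x)a_1^-(x)\sigma(dx)$, $\int_\Delta a_2^+(\mathcal{J}L_2(x))a_2^-(L_2(x))\sigma(dx)$, and $\int_\Delta a_2^-(\mathcal{J}L_1(x))a_2^+(L_1(x))\sigma(dx)$, while the remaining six arrange themselves into three adjoint pairs (for instance $\int_\Delta a_1^+(x)a_2^+(L_1(x))\sigma(dx)$ is, using $\mathcal{J}^2=1$ and $(\mathbb{L}_1^\Delta)'=\mathcal{J}\mathbb{L}_1^\Delta J$, the adjoint of $\int_\Delta a_2^-(\mathcal{J}L_1(x))a_1^-(x)\sigma(dx)$). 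Each pair has a Hermitian sum, so $\rho(\Delta)^*\supseteq \rho(\Delta)$ on $\mathcal{F}_{\mathrm{fin}}(\mathcal{E})$.

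For the commutation, I would reproduce the strategy of the proof of Proposition \ref{t7re6} by establishing the push-through identity
\begin{equation*}
\rho(\Delta_1)\int_{\Delta_2}A^+(y)CA^-(y)\sigma(dy)=\int_{\Delta_2}A^+(y)\rho(\Delta_1)CA^-(y)\sigma(dy)+\int_{\Delta_1\cap\Delta_2}A^+(y)CA^-(y)\sigma(dy)
\end{equation*}
for every $C\in\mathcal{L}(\mathcal{F}_{\mathrm{fin}}(\mathcal{E}))$, the exact analogue of \eqref{cx4t3y7}. Applied with $C=I$ it gives $\rho(\Delta_1)\rho(\Delta_2)=\int_{\Delta_2}A^+(y)\rho(\Delta_1)A^-(y)\sigma(dy)+\int_{\Delta_1\cap\Delta_2}A^+(y)A^-(y)\sigma(dy)$; the mirror identity, obtained by pushing $\rho(\Delta_1)$ past $A^-(y)$ on the right, yields $\int_{\Delta_2}A^+(y)\rho(\Delta_1)A^-(y)\sigma(dy)=\rho(\Delta_2)\rho(\Delta_1)-\int_{\Delta_1\cap\Delta_2}A^+(y)A^-(y)\sigma(dy)$. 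Adding the two cancels the boundary terms and delivers $\rho(\Delta_1)\rho(\Delta_2)=\rho(\Delta_2)\rho(\Delta_1)$.

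The hardest step is the rigorous verification of the push-through identity. In contrast to the Poisson situation of Proposition \ref{t7re6}, where $A^\pm$ decomposes as $a^\pm$ plus a scalar, the presence of the $a_2^\pm(L_i(\cdot))$ summands in \eqref{ytyuryr} produces many cross terms on both sides. The decisive observation is that all cross terms built purely from $a_2^\pm$-components commute with $\rho(\Delta_1)$, since $\Phi(x)$ and $\Psi(x)$ from Section~\ref{ew56u3wu} commute with each other by assumptions \eqref{ydxdrdxdrg}, \eqref{xrsa5yw4}; the only contribution that is not absorbed into $\int_{\Delta_2}A^+(y)\rho(\Delta_1)CA^-(y)\sigma(dy)$ comes from the CCR $[a_1^-(x),a_1^+(y)]=(y,Jx)_{\mathcal{H}}$, and produces exactly the $\chi_{\Delta_1\cap\Delta_2}$ correction. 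The bookkeeping then mirrors the direct calculation of Proposition \ref{t7re6}.
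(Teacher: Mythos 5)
Your proposal is correct. The well-definedness and Hermiticity parts coincide in substance with the paper's treatment, which simply invokes Proposition~\ref{cyse6ui5i} (with $C$ the identity) and says that Hermiticity ``easily follows'' from its proof; your explicit sorting of the nine summands of $A^+(x)A^-(x)$ into three individually Hermitian integrals and three adjoint pairs is a correct elaboration of what the paper leaves implicit, and the sample adjoint identification you give checks out against the series representations \eqref{s4qy} and \eqref{cxrw53ew}. For the commutation, however, you take a genuinely different route. The paper proves $[\rho(\Delta_1),\rho(\Delta_2)]=0$ directly from the strongly convergent double-series expansions \eqref{r5w5w32}, \eqref{fys6w6u4}--\eqref{cxrw53ew}: the product of two strongly convergent sums of bounded operators converges termwise, so the claim reduces to CCR manipulations among the discrete operators $a_1^{\pm}(e_i)$, $a_2^{\pm}(u_j)$. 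You instead establish the push-through identity (the analogue of \eqref{cx4t3y7}) and deduce commutation by combining it with its mirror version; note that your ``adding the two'' is really a substitution of the second identity into the first, which does give $\rho(\Delta_1)\rho(\Delta_2)=\rho(\Delta_2)\rho(\Delta_1)$. The paper only proves \eqref{cx4t3y7} later, in Theorem~\ref{due6uew4}, where it is needed for the Wick-ordering formula \eqref{terw5yw35}; your route is therefore more economical globally, since the identity must be established anyway, at the cost of front-loading the hardest computation into this proposition. Your structural observation is sound: the components $\Phi(y)$ and $\Psi(y)$ of $A^{\mp}(y)$ commute with every summand of $\rho(\Delta_1)$ because the only nonvanishing elementary commutator is $[a_1^-(x),a_1^+(y)]$ (all $a_2$-commutators vanish by \eqref{ydxdrdxdrg}, \eqref{xrsa5yw4}), which produces exactly the $\chi_{\Delta_1\cap\Delta_2}$ correction. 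Be aware, though, that making the push-through identity rigorous still requires the same termwise manipulation of the series \eqref{r5w5w32}, \eqref{fys6w6u4}--\eqref{cxrw53ew} (together with the norm-boundedness of strongly convergent sums), so at the technical level the two approaches ultimately rely on the same machinery.
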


\begin{proof} By Proposition \ref{cyse6ui5i}, we have  $\rho(\Delta)\in\mathcal L(\mathcal F_{\mathrm{fin}}(\mathcal E))$. The fact that $\rho(\Delta)$ is a Hermitian operator in $\mathcal F(\mathcal E)$ easily follows from the proof of Proposition~ \ref{cyse6ui5i}.

To prove the commutation, one uses the corresponding Bochner integrals,  formulas 
\eqref{r5w5w32}, \eqref{fys6w6u4}--\eqref{cxrw53ew}. In doing so, one uses the fact that every strongly convergent sequence of bounded linear operators is norm-bounded. Hence, for every strongly convergent sums of bounded linear operators, $A=\sum_{i\ge1}^\infty A_i$ and $B=\sum_{j\ge1}^\infty B_j$, one has $AB=\sum_{i,j\ge 1}A_iB_j$, where the latter double series converges strongly. 
 \end{proof}

\begin{theorem}\label{due6uew4}
The operators $(\rho(\Delta))_{\Delta\in\mathcal B_0(X)}$ defined by Propositions~\ref{cyse6ui5i}, \ref{3rtlgpr}   and the state $\tau$ defined by the vacuum vector $\Omega$ satisfy the assumptions of Theorem~\ref{tes56uwe4u6}. The corresponding point process $\mu$ is the Cox process $\Pi_R$, where $R(x)=|G(x)|^2$, and $G(s)$ is the Gaussian field from Theorem~\ref{reaq5y43wu}. The point process $\Pi_R$ is hafnian with the correlation kernel $\mathbb K(x,y)$ given by \eqref{d6e6ie4}, where $\mathcal K_1(x,y)$ and $\mathcal K_2(x,y)$ are  given by \eqref{w909i8u7y689} and \eqref{ftsqw43qd}, respectively.
\end{theorem}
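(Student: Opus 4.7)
The plan is to verify the three hypotheses of Theorem~\ref{tes56uwe4u6}: commutativity of $\{\rho(\Delta)\}$ (already given by Proposition~\ref{3rtlgpr}), the existence of symmetric correlation measures $\theta^{(n)}$ satisfying \eqref{6esuw6u61d}, and the bound \eqref{fte76i4}. Throughout, I will use the splitting $A^+(x)=a_1^+(x)+\Psi(x)$, $A^-(x)=a_1^-(x)+\Phi(x)$ from \eqref{ytyuryr} and the tensor factorization $\mathcal F(\mathcal E)=\mathcal F(\mathcal H)\otimes\mathcal F(\mathcal G)$ with $\Omega=\Omega_1\otimes\Omega_2$, under which $a_1^\pm$ act only on the first factor and $\Phi,\Psi$ act only on the second.

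First, I would lift the Wick-ordering identity of Proposition~\ref{t7re6} to the present setting. The CCR yield, for any $C\in\mathcal L(\mathcal F_{\mathrm{fin}}(\mathcal E))$, the key relation
\begin{equation*}
\rho(\Delta_1)\!\int_{\Delta_2}\!A^+(x)CA^-(x)\,\sigma(dx)=\int_{\Delta_2}\!A^+(x)\rho(\Delta_1)CA^-(x)\,\sigma(dx)+\int_{\Delta_1\cap\Delta_2}\!A^+(x)CA^-(x)\,\sigma(dx),
\end{equation*}
where both sides are interpreted as the Bochner integrals of Proposition~\ref{cyse6ui5i}. Iterating this identity along the recurrence \eqref{dsaea78} produces a rigorous version of \eqref{terw5yw35}. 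Applying the vacuum state $\tau(\cdot)=\langle\cdot\,\Omega,\Omega\rangle_{\mathcal F(\mathcal E)}$ and interchanging $\tau$ with the Bochner integral (permissible because $\tau$ restricts to a bounded linear functional on the operator spaces of Proposition~\ref{cyse6ui5i}) reduces the computation of $\tau({:}\rho(\Delta_1)\cdots\rho(\Delta_n){:})$ to the pointwise evaluation of $\langle A^+(x_n)\cdots A^+(x_1)A^-(x_1)\cdots A^-(x_n)\Omega,\Omega\rangle$ integrated against $\sigma^{\otimes n}$.

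Next, I would compute this pointwise vacuum expectation. Expanding the product via $A^\pm=a_1^\pm+\Psi/\Phi$ and using commutativity between the two factors, the product rearranges, term by term, into $T_{\mathcal H}\otimes T_{\mathcal G}$, and the vacuum expectation factorizes. The crucial observation is that in every term the $a_1^+(x_i)$-factors sit in the leftmost positions $i\le n$ while the $a_1^-(x_j)$-factors sit to their right at positions $j>n$; hence $T_{\mathcal H}\Omega_1$ either equals zero (as soon as at least one $a_1^-$ appears) or lies in a strictly positive-particle sector orthogonal to $\Omega_1$ (if only $a_1^+$'s appear). In either case $\langle T_{\mathcal H}\Omega_1,\Omega_1\rangle=0$ unless $T_{\mathcal H}=1$. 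Only the all-$\Psi/\Phi$ term survives, leaving
\begin{equation*}
\langle A^+(x_n)\cdots A^-(x_n)\Omega,\Omega\rangle=\langle\Psi(x_n)\cdots\Psi(x_1)\Phi(x_1)\cdots\Phi(x_n)\Omega_2,\Omega_2\rangle_{\mathcal F(\mathcal G)}.
\end{equation*}
Since $\sigma(X\setminus\Lambda)=0$, for $\sigma^{\otimes n}$-a.e.\ $(x_1,\dots,x_n)$ the isomorphism $\mathcal I$ of Theorem~\ref{reaq5y43wu} (formula \eqref{cy6e64u43}) identifies the right-hand side with $\mathbb E\big(\overline{G(x_n)}\cdots\overline{G(x_1)}G(x_1)\cdots G(x_n)\big)=\mathbb E(|G(x_1)|^2\cdots|G(x_n)|^2)=\mathbb E(R(x_1)\cdots R(x_n))$, which is the $n$-th correlation function $k^{(n)}$ of $\Pi_R$ and equals the hafnian \eqref{dr6e6u43wq}. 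Thus $\theta^{(n)}=\tfrac1{n!}k^{(n)}\sigma^{\otimes n}$, which is concentrated on $X^{(n)}$ since $\sigma$ is non-atomic.

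Finally, for the bound \eqref{fte76i4} I would estimate the hafnian combinatorially. Using \eqref{xrsa5yw4} and Cauchy--Schwarz, each entry of the $2\times 2$-block $\mathbb K(x_i,x_j)$ is bounded by $\|L_1(x_i)\|_{\mathcal G}\|L_1(x_j)\|_{\mathcal G}$; since the hafnian is a sum of $(2n-1)!!$ products of $n$ such entries, and each variable $x_i$ contributes to exactly two factors, one gets $k^{(n)}(x_1,\dots,x_n)\le(2n-1)!!\,\prod_{i=1}^n\mathcal K_1(x_i,x_i)$, whence $\theta^{(n)}(\Delta^n)\le C_\Delta^n$ with $C_\Delta=2\int_\Delta\mathcal K_1(x,x)\,\sigma(dx)$, finite by \eqref{vd5yw573} and vanishing as $\Delta\downarrow\varnothing$. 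Theorem~\ref{tes56uwe4u6} then delivers the point process $\mu$ with the required correlation measures, and uniqueness \cite{Lenard} identifies $\mu=\Pi_R$. The main obstacle I expect is Step~2: justifying the factor-wise tensor decomposition of the iterated Bochner integrals (where the different pieces of $A^\pm$ produced by Proposition~\ref{cyse6ui5i} are defined through quite different Bochner constructions) carefully enough that the ``all $a_1$-terms die at the vacuum'' argument can be applied termwise under the integral.
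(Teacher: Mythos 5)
Your proposal is correct and follows essentially the same route as the paper: establish the commutation identity \eqref{cx4t3y7} for the rigorously defined integrals, iterate it to obtain \eqref{terw5yw35}, apply the vacuum state so that only the $\Psi\dotsm\Psi\Phi\dotsm\Phi$ contribution survives, identify the result with $\mathbb E(|G(x_1)|^2\dotsm|G(x_n)|^2)=\operatorname{haf}[\mathbb K(x_i,x_j)]$ via Theorem~\ref{reaq5y43wu}, and verify \eqref{fte76i4} with the same constant $C_\Delta=2\int_\Delta\|L_1(x)\|_{\mathcal G}^2\,\sigma(dx)$ obtained from Cauchy--Schwarz and the count of pair partitions. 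Your explicit tensor-factorization argument for why the $a_1^\pm$ terms vanish at the vacuum is a detail the paper leaves implicit, but it is the intended justification.
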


\begin{proof} Direct calculations show that, for any $\Delta_1,\Delta_2 \in\mathcal B_0(X)$ and $C\in\mathcal L(\mathcal F_{\mathrm{fin}}(\mathcal E))$, formula \eqref{cx4t3y7} holds, which implies formula \eqref{terw5yw35}. We apply the vacuum state  $\tau$ to \eqref{terw5yw35}. Using formulas \eqref{dr6e6u43wq}, \eqref{d6e6ie4}, \eqref{waaq4yq5y}, \eqref{ytd7r57}, \eqref{ytyuryr}, Theorem~\ref{reaq5y43wu}, and Proposition~\ref{cyse6ui5i}, we conclude that  the measure $\theta^{(n)}$ is given by
\begin{align}
\theta^{(n)}(dx_1\dotsm dx_n)&=\frac1{n!}\,\tau\big(\Psi(x_n)\dotsm\Psi(x_1)\Phi(x_1)\dotsm\Phi(x_n)\big)
\sigma^{\otimes n}(dx_1\dotsm dx_n)\notag\\
&=\frac1{n!}\,\mathbb E\big(\overline{G(x_n)}\dotsm \overline{G(x_1)}G(x_1)\dotsm G(x_n)\big)\sigma^{\otimes n}(dx_1\dotsm dx_n)\label{buyftd7s}\\
&=\frac1{n!}\,\mathbb E\big(|G(x_1)|^2\dotsm|G(x_n)|^2\big)\sigma^{\otimes n}(dx_1\dotsm dx_n)\label{vud6ws5}\\
&=\frac1{n!}\,\operatorname{haf}[\mathbb K(x_i,x_j)]_{i,j=1,\dots,n}\,\sigma^{\otimes n}(dx_1\dotsm dx_n).\label{uyfy7de6}
\end{align}
In particular, $\theta^{(n)}$ is a positive measure that is concentrated on $X^{(n)}$.

If $\mathcal Y=G(x)$ or $\overline{G(x)}$ and $\mathcal Z=G(y)$ or $\overline{G(y)}$, then
$$|\mathbb E(\mathcal Y\mathcal Z)|\le\big(\mathbb E(|\mathcal Y|^2)\mathbb E(|\mathcal Z|^2)\big)^{1/2}=\big(\mathbb E(|G(x)|^2)\,\mathbb E(|G(y)|^2)\big)^{1/2}=\|L_1(x)\|_{\mathcal G}\,\|L_1(y)\|_{\mathcal G}. $$ The number of all partitions $\{i_1,j_1\},\dots,\{i_n,j_n\}$ of $\{1,\dots,2n\}$ is $\frac{(2n)!}{(n!)2^n}\le 2^nn!$\,. Hence, by \eqref{buyftd7s} and the formula for the moments of Gaussian random variables,
$$\theta^{(n)}(\Delta^n)\le\bigg(2\int_\Delta\|L_1(x)\|_{\mathcal G}^2\,\sigma(dx)\bigg)^n,\quad\Delta\in\mathcal B_0(X).$$
Thus, the operators $(\rho(\Delta))_{\Delta\in\mathcal B_0(X)}$   satisfy the assumptions of Theorem~\ref{tes56uwe4u6}.

The statement of the theorem about the arising point process $\mu$ follows immediately from \eqref{vud6ws5} and \eqref{uyfy7de6}.
\end{proof}

\subsection*{Acknowledgments}
The authors are grateful to the referee for valuable comments that improved the manuscript.

\end{document}